\tikzstyle{res}=[circle,thick,minimum size=4mm,draw=black,fill=red,inner sep=1pt]
\tikzstyle{non-res}=[circle,thick,minimum size=4mm,draw=black,inner sep=1pt]
\tikzstyle{light-res}=[circle,thick,minimum size=4mm,draw=black,fill=red!40,inner sep=1pt]
\tikzstyle{blue}=[circle,thick,minimum size=4mm,draw=black,fill=blue!20,inner sep=1pt]
\newtheorem{theorem}{Theorem}[section]
\newcommand{\mo}{\textnormal{Mo}}
\newcommand{\irr}{\textnormal{irr}}
\newcommand{\peri}{\textnormal{peri}}
\newcommand{\spr}{\textnormal{spr}}
\newcommand{\eperi}{\textnormal{eperi}}
\newcommand{\espr}{\textnormal{espr}}
\theoremstyle{remark}
\newtheorem*{remark}{Remark}
\theoremstyle{theorem}
\newtheorem{claim}[theorem]{Claim}
\newtheorem{definition}[theorem]{Definition}
\newtheorem{lemma}[theorem]{Lemma}
\newtheorem{thm}[theorem]{Theorem}
\newtheorem{conjecture}[theorem]{Conjecture}
\def\finf{\mathop{{\rm I}\kern -.27 em {\rm F}}\nolimits}
\newcommand{\Comments}{1}
\newcommand{\mynote}[2]{\ifnum\Comments=1\textcolor{#1}{#2}\fi}
\newcommand{\mytodo}[2]{\ifnum\Comments=1%
  \todo[linecolor=#1!80!black,backgroundcolor=#1,bordercolor=#1!80!black]{#2}\fi}
\begin{document}

\title{Extremal Bounds on Peripherality Measures}
\author{Linus Tang}
\date{June 18, 2023}

\maketitle

\begin{abstract}
\noindent
We investigate several measures of peripherality for vertices and edges in networks. We improve asymptotic bounds on the maximum value achieved by edge peripherality, edge sum peripherality, and the Trinajstić index over $n$ vertex graphs. We also prove similar results on the maxima over $n$-vertex bipartite graphs, trees, and graphs with a fixed diameter. Finally, we refute two conjectures of Furtula, the first on necessary conditions for minimizing the Trinajstić index and the second about maximizing the Trinajstić index.
\end{abstract}



\section{Introduction}

A centrality measure is an approximate descriptor of the importance or influence of a vertex in a graph. Central vertices are often ones with large degrees and are critical in the structure of the graph. Peripherality measures are the opposite; peripheral vertices are the least important in a graph. Centrality and peripherality measures can also be defined on edges of a graph.

Measures of centrality and peripherality can be used to analyze systems of chemical reactions in order to determine the most and least important reactions and chemical species in a system. Silva et al.\ \cite{Chem2} analyzed three systems of atmospheric chemical reactions called SuperFast, GEOS-Chem v12.6, and the Master Chemical Mechanism v3.3 through a directed graph model in which the vertex set consisted of chemical species and reactions. Edges were from reactions to their products and to reactions from their reactants. In order to determine the most important chemical species within each system, the authors computed a certain centrality measure, called out-degree centrality, for each vertex. Geneson and Tsai \cite{gt22} defined undirected graphs on atmospheric chemical reaction systems SuperFast and $\text{MOZART}$-4 and analyzed various measures of centrality and peripherality for the vertices and edges to determine the most and least important reactants and reactions within each system.

Asif et al.\ \cite{Neural} studied the applications of topological indices, especially the Mostar index, to neural networks. Because peripheral vertices are placed in less influential positions in the graph, they do not affect the flow of information as quickly. Thus, the authors posit that the flow of data in probabilistic neural networks (PNN) and convolutional neural networks (CNN) is optimized when their peripheralities are minimized. Asif et al.\ compute the Mostar index (a measure of peripherality of a graph) for PNN and CNN.

Geneson and Tsai \cite{gt22} defined measures called peripherality, edge peripherality, and edge sum peripherality, denoted $\peri(v)$, $\eperi(\{u,v\})$, and $\espr(\{u,v\})$, respectively. They also define corresponding measures for graphs $G$, denoted $\peri(G)$, $\eperi(G)$, and $\espr(G)$. They computed the values of $\peri(G)$ for families of graphs including complete graphs, cycles, paths, and complete bipartite graphs. They also proved extremal results including the minimum and maximum values of $\peri(G)$ over connected graphs $G$ of order $n$. They computed the values of $\eperi(G)$ for families of graphs including complete graphs, cycles, paths, and complete bipartite graphs. They also asymptotically determined maximum values of $\eperi(G)$ over connected graphs $G$ of order $n$. They computed the values of $\espr(G)$ for families of graphs including complete graphs and complete bipartite graphs. They also asymptotically determined maximum values of $\espr(G)$ over connected graphs $G$ of order $n$. 

For a pair $\{u,v\}$ of distinct vertices, Do\v{s}li\'{c} et al.\ \cite{Mo3} defined its Mostar index, denoted $\mo(\{u,v\})$. They also defined the Mostar index of a graph $G$, denoted $\mo(G)$. A related index, defined by Miklavi\v{c} and \v{S}parl \cite{MikTotalMo}, is the total Mostar index, or distance-unbalancedness, denoted $\mo^*(G)$. The Mostar index has been the subject of dozens of papers since it was introduced by Do\v{s}li\'{c} et al.\ in \cite{Mo3}. Deng and Li \cite{MoChem} proved extremal results about the Mostar index of chemical trees (trees where every vertex has degree at most 4) and general trees of any fixed diameter. In \cite{MoDegSeq}, the same authors determined the trees of any fixed degree sequence that maximize the Mostar index. In \cite{ghorbani}, Ghorbani et al.\ determined a sufficient condition for the Mostar index of a graph to equal 0. In \cite{Mo2} and \cite{Mo1}, Miklavi\v{c} et al.\ proved bounds on the maximum Mostar index over connected graphs, split graphs, and bipartite graphs. In \cite{KrRa} and \cite{KrRa1}, Kramer and Rautenbach established bounds on the maximum and minimum distance-unbalancedness of trees.

Furtula \cite{NT} defined the Trinajstić index of a graph, denoted $NT(G)$. Furtula \cite{NT} ran statistical tests to find the correlation between the Trinajstić index and several other measures, finding a loose correlation with the total Mostar index. Few extremal results about the Trinajstić index have been proven, but computer searches led Furtula to three conjectures on trees that minimize $NT(G)$, graphs that maximize $NT(G)$, and graphs that minimize $NT(G)$.

For an edge $\{u,v\}$ of a graph, Albertson \cite{irr} defined its irregularity, denoted $\irr(\{u,v\})$. Albertson also defined the irregularity of a graph $G$, denoted $\irr(G)$. Gao et al.\ \cite{Mo-irr} determined the maximum difference $\mo(T)-\irr(T)$ over $n$-vertex trees $T$ for all $n\leq22$. Geneson and Tsai \cite{gt22} proved as asymptotic bound of $n^2(1-o(1))$.

In Section \ref{eperi}, we improve the extremal bounds on the maximum value attained by $\eperi(G)$ over $n$-vertex graphs. Whereas Geneson and Tsai proved in \cite{gt22} that this value was between $\frac2{125}n^3$ and $\frac12n^3$, we narrow these bounds to $\frac{\sqrt3}{24}n^3(1-o(1))$ and $\frac16n^3$. In Section \ref{espr}, we improve the analogous bounds on $\espr(G)$ from $\frac18n^4-O(n^2)$ and $n^4$ to $\frac5{32}n^4-O(n^3)$ and $\frac14n^4$, respectively. Additionally, in \ref{esprdiam2}, we prove that the maximum value of $\espr(G)$ over $n$-vertex graphs $G$ with diameter 2 is $\frac4{27}n^4-O(n^3)$. We also prove that the maximum value of $\espr(G)$ over bipartite $n$-vertex graphs $G$ with diameter 3 is $\frac18n^4-O(n^2)$. In Section \ref{peri}, we detail computations that establish the maximum value of the peripherality index over $n$-vertex graphs and over $n$-vertex trees, $n\leq 8$.

In Section \ref{NTS}, we disprove two conjectures from Furtula \cite{NT} about the Trinajstić index. One conjecture suggests that every graph with a Trinajstić index of $0$ must be regular. We provide several infinite classes of counterexamples to this conjecture. The other conjecture guesses that a certain family of graphs achieves the maximum possible value of the Trinajstić index of an $n$-vertex graph. We show that this conjecture has infinitely many counterexamples by determining that the maximum value achieved by the Trinajstić index of an $n$-vertex graph is $(0.5-o(1))n^4$ and proving that the family described does not achieve this asymptotic maximum.


\section{Preliminaries}

All graphs discussed are simple finite undirected graphs unless otherwise specified. The set of vertices and the set of edges of a graph $G$ will be denoted $V$ and $E$, respectively. For vertices $u,v$ of graph $G$, $d(u,v)$ is defined to be the graph theoretic distance between $u$ and $v$. For vertices $u,v$ of graph $G$, $n_G(u,v)$ is defined to be the number of vertices $x$ of $G$ such that $d(x,u)<d(x,v)$. In other words, it is the number of vertices that are closer to $u$ than to $v$. Naturally, a vertex $u$ for which $n_G(u,v)$ tends to be relatively small can be considered to be peripheral, and a vertex $u$ for which $n_G(u,v)$ tends to be relatively large can be considered to be central. The peripherality of a vertex, denoted $\peri(v)$, is the number of vertices $u$ such that $n_G(u,v)>n_G(v,u)$. The peripherality of a graph is defined as $\peri(G)=\sum\limits_{v\in V}\peri(v)$.

The edge peripherality of an edge, denoted $\eperi(\{u,v\})$, is the number of vertices $x$ such that $n_G(x,u)>n_G(u,x)$ and $n_G(x,v)>n_G(v,x)$. In other words, the vertices $x$ counted are the ones for which more vertices are closer to $x$ than to $u$ and more vertices are closer to $x$ than to $v$. The edge peripherality of a graph is defined as $\eperi(G)=\sum\limits_{\{u,v\}\in E}\eperi(\{u,v\})$. The edge sum peripherality of an edge is defined as $\espr(\{u,v\})=\sum\limits_{x\in V-\{u,v\}}(n_G(x,u)+n_G(x,v))$. As with edge peripherality, the edge sum peripherality of a graph is defined as $\espr(G)=\sum\limits_{\{u,v\}\in E}\espr(\{u,v\})$.

The Mostar index of a pair of vertices is defined as $\mo(\{u,v\})=|n_G(u,v)-n_G(v,u)|$. Note that the order of $u$ and $v$ does not matter. The Mostar index of a graph is the sum of this quantity over all of its edges, $\mo(G)=\sum\limits_{\{u,v\}\in E}\mo(\{u,v\})$. The total Mostar index sums over all pairs of vertices, $\mo^*(G)=\sum\limits_{\{u,v\}\subset V}\mo(\{u,v\})$.

The Trinajstić index of a pair of vertices is defined as $NT(\{u,v\})=(n_G(u,v)-n_G(v,u))^2$. Note that $\{u,v\}$ need not be an edge and that the order of $u$ and $v$ does not matter. This is similar to the definition of the total Mostar index, except that a square, not an absolute value, is taken. As with the total Mostar index, the Trinajstić index of a graph is given by $NT(G)=\sum\limits_{\{u,v\}\subset V}NT(\{u,v\})$.

The irregularity of an edge is defined as $\irr(\{u,v\})=|\deg(u)-\deg(v)|$. Note that the order of $u$ and $v$ does not matter. As with $\mo(G)$, the irregularity of a graph is given by $\irr(G)=\sum\limits_{\{u,v\}\in E}\irr(\{u,v\})$. The definition of irregularity is similar to the definition of the Mostar index, except that each $n_G$ has been replaced with the degree of a vertex.

\section{Edge Peripherality Index}\label{eperi}

In this section, we prove extremal results about the edge peripherality index. We first define a family of graphs that achieves $\eperi(G)\geq\frac{\sqrt{3}}{24}n^3(1-o(1))$, up from the $\frac2{125}n^3$ achieved by the construction in \cite{gt22}. We also improve the upper bound on $\eperi(G)$ from the $\frac12n^3$ given in \cite{gt22} to $\frac16n^3$.

\begin{definition}
In a graph $G$, call an ordered pair $(\{u,v\},x)$ consisting of an edge $\{u,v\}$ and a vertex $x$ $\mathrm{dominant}$ if
\[n_G(x,u)>n_G(u,x)\textrm{ and }n_G(x,v)>n_G(v,x).\]
\end{definition}

Thus, $\eperi(G)$ is equal to the number of dominant pairs.

\begin{thm}
The maximum possible value of $\eperi(G)$ among all connected graphs of order $G$ is at least $\frac{\sqrt{3}}{24}n^3(1-o(1))$.
\end{thm}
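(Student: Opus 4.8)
The plan is to construct an explicit family of graphs on $n$ vertices with many dominant pairs. The natural candidate is a complete multipartite-type or "layered" construction: take a relatively small "central" part and a large "peripheral" part so that for many edges $\{u,v\}$, a large set of vertices $x$ is simultaneously closer (in the $n_G$ sense) to $x$ than to $u$ and to $v$. A clean choice is a complete tripartite-like graph, or better, a graph built from three groups $A$, $B$, $C$ where vertices inside the groups have high degree and form the "endpoints" of dominant edges, while a third group supplies the witnesses $x$. Concretely, I would try $G$ consisting of a clique-like core on two small sets together with an independent set, tuned so that an edge $\{u,v\}$ with both endpoints of low $n_G$-centrality is dominated by roughly a constant fraction of the vertices; the free parameter is the relative sizes of the parts, and optimizing the product (number of such edges) $\times$ (number of witnesses per edge) subject to the sizes summing to $n$ is what produces the constant $\frac{\sqrt3}{24}$.

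The key steps, in order, are: (1) define the family $G_n$ depending on a size parameter $t = \alpha n$ (for a constant $\alpha$ to be chosen), describing its vertex partition and edge set precisely; (2) compute, or at least lower-bound, $n_G(x,y)$ for the relevant pairs $(x,y)$ — this only requires understanding distances, which in a graph of diameter $2$ reduces to comparing neighborhoods, so $n_G(x,u) > n_G(u,x)$ becomes a statement about how many vertices are adjacent to $x$ but not $u$ versus adjacent to $u$ but not $x$; (3) identify a set $\mathcal{E}$ of edges $\{u,v\}$ and, for each, a set $S_{uv}$ of witnesses $x$ forming dominant pairs, and show $|\mathcal{E}| \cdot \min |S_{uv}|$ is $\frac{\sqrt3}{24} n^3(1-o(1))$ for the optimal $\alpha$; (4) verify $G_n$ is connected (immediate if diameter is $2$) and conclude. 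The error terms $o(1)$ absorb boundary effects from vertices where the neighborhood comparison is a near-tie.

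The main obstacle I expect is step (2)–(3): controlling $n_G(x,u)$ versus $n_G(u,x)$ carefully enough. These counts are sensitive to ties — vertices equidistant from $x$ and $u$ contribute to neither side — so the construction must be arranged so that the "strictly closer to $x$" set genuinely beats the "strictly closer to $u$" set by a margin, not just weakly, and this is where the specific adjacency pattern (and the exclusion of a few degenerate vertices into the $o(1)$ term) matters. A secondary subtlety is making sure that the same vertex $x$ works as a witness for an edge $\{u,v\}$ for \emph{both} endpoints simultaneously, which is a stronger condition than being peripheral-dominating for one endpoint; this forces $u$ and $v$ to lie in the same "low-centrality" part and is what pins down the tripartite-like shape. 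Once the neighborhood bookkeeping is set up, optimizing the resulting cubic $\alpha(1-\alpha)^2$-type expression (whence $\alpha = 1/3$ and the constant $\frac{\sqrt3}{24}$) is routine.
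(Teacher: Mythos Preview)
Your proposal has a genuine gap. The final sentence is a red flag: you claim that optimizing a cubic of the form $\alpha(1-\alpha)^2$ at $\alpha=1/3$ yields the constant $\tfrac{\sqrt3}{24}$, but $\tfrac13\cdot\bigl(\tfrac23\bigr)^2=\tfrac{4}{27}$, and no rational multiple of this equals $\tfrac{\sqrt3}{24}$. An irrational constant cannot arise from maximizing a polynomial with rational coefficients at a rational point, so whatever single-parameter diameter-$2$ construction you have in mind, the bookkeeping cannot land on $\tfrac{\sqrt3}{24}$ as claimed.

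There is also a structural obstruction to the specific shape you sketch. In a diameter-$2$ graph one checks that $n_G(x,u)>n_G(u,x)$ holds precisely when $\deg(x)>\deg(u)$. So if your ``peripheral'' part $P$ is a clique on $(1-\alpha)n$ vertices, every $u\in P$ already has degree at least $(1-\alpha)n-1$, and a witness $x$ must have strictly larger degree; in the natural two-block constructions this forces the witness block to have more than $|P|$ vertices, i.e.\ $\alpha>1/2$, and then $\tfrac12(1-\alpha)^2\alpha\cdot n^3$ is maximized at the boundary $\alpha=1/2$ with value $\tfrac{1}{16}n^3<\tfrac{\sqrt3}{24}n^3$. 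A single ``low-centrality clique plus central witnesses'' layer therefore tops out at $\tfrac1{16}n^3$, which is exactly the baseline the paper starts from, not the target.

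The paper's construction is of a completely different nature and is not diameter $2$. It builds a hub vertex $v$ with many arms, each arm ending in a complete graph; the clique sizes form a geometric sequence $\lfloor s\alpha^i\rfloor$ with ratio $\alpha=\tfrac{\sqrt3-1}{2}$, plus one extra large clique of size roughly $\tfrac{s}{1-\alpha}$. The arm lengths are staggered so that cliques farther from $v$ are the smaller ones, which guarantees that $d(x,v)<d(y,v)\Rightarrow n_G(x,y)>n_G(y,x)$. Then every edge inside a clique at level $i$ is dominated by every vertex in every clique at level $I>i$, and summing the resulting double geometric series is what produces the constant $\tfrac{\sqrt3}{24}$. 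The $\sqrt3$ comes from optimizing the ratio of the geometric progression, not from a one-shot cubic.
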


\begin{proof}
Define the constant $\alpha=\frac{\sqrt3-1}2$. For $s$ going to $\infty$, construct graph $G_s$ as follows.

Start with the following complete graphs:
\begin{itemize}
\item $K_{\left\lfloor\frac s{1-\alpha}\right\rfloor}$ with vertices $a_{-1,1},a_{-1,2},\dots,a_{-1,{\left\lfloor\frac s{1-\alpha}\right\rfloor}}$

\item $K_{\left\lfloor s\alpha^i\right\rfloor}$ with vertices $a_{i,1},a_{i,2},\dots,a_{i,{\left\lfloor s\alpha^i\right\rfloor}}$ for each $i\geq 0$ for which $s\alpha^i\geq1$. Let $i_\mathrm{max}$ be the maximum such $i$.

\end{itemize}
Next, connect each complete graph to the same central vertex $v$ through chains of vertices such that larger connected components have longer chains. In particular, for each $-1\leq i\leq i_\mathrm{max}$, construct vertices $b_{i,1},b_{i,2},\dots b_{i,2i_\mathrm{max}+1-2i}$. Add an edge between $b_{i,j}$ and $b_{i,j+1}$ for each $1\leq j\leq 2i_\mathrm{max}-2i$. Then add an edge between $a_{i,1}$ and $b_{i,1}$ and an edge between $b_{i,2i_\mathrm{max}+1-2i}$ and $v$.

We now approximate $\frac{\mathrm{eperi}(G)}{n^3}$. Note that $n=(\frac s{1-\alpha}+s\alpha+s\alpha^2+\cdots)(1\pm o(1))=\frac{2s}{1-\alpha}(1\pm o(1))$ because the vertices $a_{i,j}$ make up all but a negligible portion of the vertices as $s$ goes to infinity. Also, vertex $v$ is placed in such a central position that $d(x,v)<d(y,v)\to n_{G_s}(x,y)>n_{G_s}(y,x)$ for all vertices $x,y$. It is important that the largest ``arm" extending from $v$ (the one containing the vertices $a_{-1,j}$) has less than half of the total vertices in the graph. This is true for sufficiently large $s$.

For every edge $\{u,v\}$ in the complete subgraph consisting of $a_{i,j}$ and for every vertex $x=a_{I,j}$ for $I>i$, we have that $(\{u,v\},x)$ is dominant. When $i=-1$, the edge-vertex pairs described above contribute about $\frac{1}{16}n^3$ dominant pairs. This is because the edge must be chosen from a complete subgraph with about $\frac n2$ vertices, and the vertex must be chosen from about $\frac n2$ other vertices.

When $i\geq 0$, the edge-vertex pairs described above contribute $\frac12(\lfloor s\alpha^i\rfloor)^2\lfloor s\alpha^I\rfloor$ dominant pairs. We now sum over $i$ and $I$, ignoring the floors function, as the error caused is a factor of $(1-o(1))$:
\begin{align*}
\sum_{i=0}^\infty\sum_{I=i+1}^\infty\frac12(\lfloor s\alpha^i\rfloor)^2\lfloor s\alpha^I\rfloor&=\left[\sum_{i=0}^\infty\sum_{I=i+1}^\infty\frac12(s\alpha^i)^2(s\alpha^I)\right](1-o(1))\\
&=\left[\frac12s^3\sum_{i=0}^\infty\sum_{j=1}^\infty\alpha^{3i+j}\right](1-o(1))\\
&=\left[\frac12s^3\sum_{i=0}^\infty\frac{\alpha^{3i+1}}{1-\alpha}\right](1-o(1))\\
&=\left[\frac12s^3\frac{\alpha}{(1-\alpha)(1-\alpha^3)}\right](1-o(1))\\
&=\left[\left(\frac{\sqrt{3}}{24}-\frac1{16}\right)n^3\right](1-o(1))
\end{align*}
Adding to the previous $\frac1{16}n^3$, we get that $\mathrm{eperi}(G)\geq\frac{\sqrt3}{24}n^3(1-o(1))$, proving the theorem.
\end{proof}

The above result is an improvement from the family of graphs provided in Theorem 8.11 of \cite{gt22} that achieved $\eperi(G)=\frac2{125}n^3$. Next, we improve the upper bound on $\eperi(G)$.

\begin{thm}
The maximum possible value of $\eperi(G)$ among all connected graphs of order $G$ is at most $\frac16n^3$.
\end{thm}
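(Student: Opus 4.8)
The plan is to bound the number of dominant pairs by sorting them according to the three-vertex set they occupy, and showing each such set can host at most one dominant pair. First I would record that in any dominant pair $(\{u,v\},x)$ the three vertices $u,v,x$ are pairwise distinct: $u\neq v$ since $\{u,v\}$ is an edge, and $x\notin\{u,v\}$ because the requirement $n_G(x,u)>n_G(u,x)$ (and likewise for $v$) reads $n_G(u,u)>n_G(u,u)$, i.e.\ $0>0$, when $x=u$. Hence every dominant pair determines an unordered triple $\{u,v,x\}\subseteq V$, and we may count dominant pairs by counting, for each triple, how many dominant pairs it gives rise to.

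The crux is the claim that for a fixed three-element set $T=\{a,b,c\}$, at most one of the three candidate pairs $(\{a,b\},c)$, $(\{a,c\},b)$, $(\{b,c\},a)$ is actually dominant. This follows from the antisymmetry of the strict inequality defining dominance. For instance, if both $(\{a,b\},c)$ and $(\{a,c\},b)$ were dominant, the first would force $n_G(c,b)>n_G(b,c)$ and the second $n_G(b,c)>n_G(c,b)$, a contradiction; comparing $a$ against $c$ rules out simultaneous dominance of $(\{a,b\},c)$ and $(\{b,c\},a)$, and comparing $a$ against $b$ rules out $(\{a,c\},b)$ and $(\{b,c\},a)$. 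So in every case two distinct candidate pairs impose opposite strict inequalities on some pair of the three vertices, and at most one candidate survives. (Here we do not even need the edge hypothesis; discarding it only weakens the count.)

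Combining the two observations, the assignment $(\{u,v\},x)\mapsto\{u,v,x\}$ is injective on the set of dominant pairs, so $\eperi(G)$, which equals the number of dominant pairs, is at most the number of three-element subsets of $V$, namely $\binom{n}{3}=\frac{n(n-1)(n-2)}{6}<\frac16 n^3$. This yields the claimed upper bound for every connected graph of order $n$, hence for the maximum. I do not anticipate any real obstacle: the only nontrivial move is recognizing that dominant pairs should be indexed by their underlying triples, after which the antisymmetry of ``$n_G(p,q)>n_G(q,p)$'' caps each triple's contribution at one and the bound is immediate; the remaining work is the routine identity $\binom n3<\frac16 n^3$.
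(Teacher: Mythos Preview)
Your proof is correct and follows essentially the same approach as the paper: you group dominant pairs by their underlying three-vertex set, use the antisymmetry of the strict inequality $n_G(p,q)>n_G(q,p)$ to show each triple supports at most one dominant pair, and conclude $\eperi(G)\le\binom{n}{3}<\frac16 n^3$. The only addition is your explicit verification that $u,v,x$ are pairwise distinct, a detail the paper leaves implicit.
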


\begin{proof}
Let $G$ be a connected graph with $n$ vertices.

For any triple $(v_1,v_2,v_3)$ of vertices, at most one of the following is a dominant pair:
\begin{itemize}
\item $(\{v_1,v_2\},v_3)$
\item $(\{v_2,v_3\},v_1)$
\item $(\{v_3,v_1\},v_2)$
\end{itemize}

For example, if the first two are both dominant pairs, then $n_G(v_1,v_3)>n_G(v_3,v_1)>n_G(v_1,v_3)$, a contradiction. Since there are only $\binom n3<\frac16n^3$ triples of vertices, it follows that the number of dominant pairs is $\mathrm{eperi}(G)<\frac16n^3$.
\end{proof}

The above result is an improvement from the $\frac12 n^3$ upper bound proven in Theorem 8.11 of \cite{gt22}.

\section{Edge Sum Peripherality Index}\label{espr}

In this section, we prove extremal results about the edge sum peripherality index. We first define a family of graphs that achieves $\espr(G)\geq\frac5{32}n^4-O(n^3)$, up from the $\frac18n^4-O(n^2)$ achieved by the construction in \cite{gt22}. We also improve the upper bound on $\espr(G)$ from $n^4$ to $\frac14n^4$. Furthermore, we prove that the maximum value of $\espr(G)$ over graphs $G$ of diameter $2$ is $\frac4{27}n^4-O(n^3)$. Additionally, we find that the maximum value of $\espr(G)$ over bipartite graphs $G$ of diameter at most $3$ is $\frac18n^4-O(n^2)$.

\subsection{Bounds on maximum edge sum peripherality over $n$-vertex graphs}

In Corollary 6.6 of \cite{gt22}, the authors supplied a family of graphs achieving $\espr(G)=\frac18n^4-O(n^2)$. We improve this lower bound through a family of graphs achieving $\espr(G)=\frac5{32}n^4-O(n^3)$. We begin with a helpful lemma.

\begin{lemma}
The index $\espr(G)$ can be approximated as
\[\espr(G)=\sum\limits_{u\in V}\deg(u)\sum\limits_{x\in V}n_G(x,u)-O(n^3).\]
\end{lemma}
\begin{proof}
The approximation follows from the following computation.
\begin{align*}
\espr(G)&=\sum_{\{u,v\}\in E}\sum_{x\in V-\{u,v\}}n_G(x,u)+n_G(x,v)\\
&=\sum_{\{u,v\}\in E}\sum_{x\in V}n_G(x,u)+n_G(x,v)-O(n^3)\\
&=2\sum_{\{u,v\}\in E}\sum_{x\in V}n_G(x,u)-O(n^3)\\
&=\sum_{u\in V}\deg(u)\sum_{x\in V}n_G(x,u)-O(n^3),
\end{align*}
where the factor of $2$ is eliminated in the last step to account for double-counting edges.
\end{proof}

\begin{thm}
The maximum possible value of $\espr(G)$ among all connected graphs of order $G$ is at least $\frac5{32}n^4-O(n^3)$.
\end{thm}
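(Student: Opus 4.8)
The plan is to exhibit an explicit connected graph on $n$ vertices and compute (an approximation of) $\espr(G)$ using the lemma just proved, namely $\espr(G)=\sum_{u\in V}\deg(u)\sum_{x\in V}n_G(x,u)-O(n^3)$. To make $\deg(u)$ large simultaneously with $\sum_x n_G(x,u)$ large, I would take a two-part construction: a clique $K$ on roughly $cn$ vertices (for a constant $c$ to be optimized) attached to a long path (or a chain of a few more clique-blobs) so that the clique vertices are ``central'' — every vertex of the path, and every vertex of the clique, is weakly closer to a given clique vertex than to the far end of the path. Concretely, one natural candidate: take a clique $C$ of size $an$, then a pendant path of length $\Theta(n)$ hanging off one vertex of $C$, with the remaining $bn$ vertices ($a+b$ close to $1$) forming the path or a second remote clique. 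For $u$ in the big clique, $\deg(u)\approx an$ and, since $u$ is near the ``center of mass,'' $n_G(x,u)$ is close to $n$ for most $x$ outside a small neighborhood of the far arm, so $\sum_x n_G(x,u)\approx n\cdot(\text{number of }x\text{ with }d(x,u)\le d(x,\text{far vertices}))$. Choosing the path to have strictly fewer than $n/2$ vertices guarantees the clique side ``wins'' the distance comparison for essentially all $x$, giving $\sum_x n_G(x,u)\approx (1-b/1)\cdot n \cdot n$ or so. Multiplying and summing the $\deg(u)$'s over the $an$ clique vertices yields a contribution of order $a\cdot(\text{const})\cdot n^4$; optimizing the split $a$ against the geometry should produce the constant $5/32$.

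In more detail, the key steps in order: (i) fix the construction — I expect something like a clique on $\frac{n}{2}$ vertices joined by a path/chain to further structure totalling the other $\frac n2$ vertices, possibly iterated in the spirit of the $\eperi$ construction, with block sizes tuned so that for $u$ in a given block of size $t$, $\deg(u)\approx t$ and $\sum_x n_G(x,u)\approx (t + \text{sizes of all blocks ``behind'' }u)\cdot n$; (ii) verify the distance inequalities: show $d(x,u)<d(x,v)$ holds for the right set of $x$ whenever $u$ is in a block closer to the center than $v$'s block — this is where the ``longer chains for larger components'' idea is used, exactly as in the earlier theorem, so that proximity to the center $v_0$ is equivalent to winning the $n_G$ comparison, up to $o(n)$ vertices of slack; (iii) plug into the lemma, replace floors and lower-order terms by $O(n^3)$, and evaluate the resulting double sum (an arithmetic/geometric series over blocks); (iv) optimize the continuous parameters describing the block sizes to get the leading coefficient, checking that the optimum equals $\frac5{32}$ and that the maximal arm stays below $n/2$ at the optimum.

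I expect the main obstacle to be step (ii) combined with step (iv): one must set up the block/chain sizes so that the ``closer to the center iff wins the $n_G(\cdot,\cdot)$ comparison'' equivalence holds for all but $o(n)$ of the relevant vertices (the same subtlety flagged in the proof of the first theorem, where the largest arm must carry fewer than half the vertices), and then the optimization over how to partition the mass into a central clique versus the chain(s) is the quantitative heart — it is easy to get $\frac18$ with a single clique plus a path, and squeezing out $\frac5{32}$ presumably requires either a cleverly weighted single clique (where a $\beta n$-fraction of vertices sits at the center contributing $\deg\approx\beta n$ and $\sum_x n_G\approx n^2$, while the rest is arranged so those central vertices still ``win'') or a small geometric cascade of cliques. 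I would first try the simplest two-block version (central clique of size $\beta n$, remote clique of size $(1-\beta)n/ \text{something}$ joined by a short chain), compute $\espr$ as a function of $\beta$, maximize, and see whether it already yields $\frac5{32}$; if it falls short I would add one more block and re-optimize. The verification that the floor functions and the $-\{u,v\}$ omissions only cost $O(n^3)$ is routine given the lemma and the fact that all quantities are $O(n)$ or $O(n^2)$.
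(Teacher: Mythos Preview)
Your proposal is a search strategy, not a proof: you never fix a construction and verify that it attains $\frac{5}{32}n^4$. More seriously, the candidate constructions you lean toward cap out strictly below $\frac{5}{32}$. A clique of size $\alpha n$ with a pendant path gives $\espr(G)\approx \frac34\alpha^2(1-\alpha)^2 n^4\le \frac{3}{64}n^4$; two cliques of sizes $\alpha n$ and $(1-\alpha)n$ joined by a short chain give $\espr(G)\approx 2\alpha^2(1-\alpha)^2 n^4\le \frac18 n^4$, i.e.\ exactly the old bound. The $\eperi$-style geometric cascade does not help either, since vertices in the small distant blocks have small degree and contribute negligibly to $\sum_u \deg(u)\sum_x n_G(x,u)$. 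So the optimization you propose over ``central clique vs.\ chain'' parameters will plateau at $\frac18$.

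The paper's construction is qualitatively different and symmetric. Take four cliques $A,B,C,D$ each of size $s\approx n/4$; make $A\cup B$ and $C\cup D$ into cliques of size $\approx n/2$; add a single bridge vertex $v$ adjacent to \emph{all} of $B$ and \emph{all} of $C$. Every non-bridge vertex has degree $\approx n/2$, and for $u$ on one side and $x$ on the other one gets $n_G(x,u)\approx n/2$, as in the plain two-clique barbell. The extra $\frac{1}{32}n^4$ that lifts the bound from $\frac18$ to $\frac{5}{32}$ comes from \emph{same-side} pairs: for $u\in A$ and $x\in B$ (both in the clique $A\cup B$), the vertex $x$ is one step closer to $v$ and hence strictly closer to every vertex of $C\cup D$, so $n_G(x,u)\approx n/2$ here as well. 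That internal asymmetry---created by having the bridge touch only the inner half of each big clique---is the idea your plan does not contain; it is not a matter of adding more blocks to a chain or tuning a size parameter, and without it one is stuck at $\frac18$.
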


\begin{proof}
Let $n=4s+1$. Define vertices $a_i,b_i,c_i,d_i$ for each $1\leq i\leq s$. Also define vertex $v$.
\begin{itemize}
\item Add an edge between $a_i$ and $a_j$ for each $i\neq j$; do the same for $b,c,$ and $d$.
\item Add an edge between $a_i$ and $b_j$ for each $i$ and $j$.
\item Add an edge between $c_i$ and $d_j$ for each $i$ and $j$.
\item Add an edge between $v$ and $b_i$ for each $i$; do the same for $c$.
\end{itemize}
We now proceed to compute $\espr(G)$. By Lemma 3.1, we want to compute $\sum\limits_{u\in E}\deg(u)\sum\limits_{x\in V}n_G(x,u)$. For any $1\leq i,j\leq s$:
\begin{itemize}
\item If $u=a_i$ and $x=b_j$, $x=c_j$, or $x=d_j$, then $\deg(u)\geq 2s$ and $n_G(x,u)\geq 2s$.
\item If $u=b_i$ and $x=c_j$ or $x=d_j$, then $\deg(u)\geq 2s$ and $n_G(x,u)\geq 2s$.
\item If $u=c_i$ and $x=a_j$ or $x=b_j$, then $\deg(u)\geq 2s$ and $n_G(x,u)\geq 2s$.
\item If $u=d_i$ and $x=a_j$, $x=b_j$, or $x=c_j$, then $\deg(u)\geq 2s$ and $n_G(x,u)\geq 2s$.
\end{itemize}
Summing over the four cases above, which constitute all but a negligible portion of the entire sum, we have $\mathrm{espr}(G)\geq 40s^4-O(n^3)=\frac5{32}n^4-O(n^3)$, proving the theorem.
\end{proof}

This construction is an improvement from the one given in Theorem 6.6, which achieved $\espr(G)=\frac18n^4-O(n^2)$. We now turn to improve the upper bound.

\begin{thm}
The maximum possible value of $\espr(G)$ among all connected graphs of order $G$ is at most $\frac14n^4$.
\end{thm}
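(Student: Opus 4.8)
The plan is to bound $\espr(G)$ via Lemma~3.1, which says $\espr(G)=\sum_{u\in V}\deg(u)\sum_{x\in V}n_G(x,u)-O(n^3)$, so it suffices to show $\sum_{u\in V}\deg(u)\sum_{x\in V}n_G(x,u)\le \frac14 n^4$. The crude observations are that $\deg(u)\le n-1<n$ and $\sum_{x\in V}n_G(x,u)\le n\cdot\max_x n_G(x,u)\le n^2$, which only gives $\le n^4$; the factor-$4$ improvement must come from exploiting that $\deg(u)$ and $n_G(x,u)$ cannot simultaneously be close to $n$ for many $u$, or more precisely from a pairing/antisymmetry argument on the quantity $n_G(x,u)$.

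The key idea I would use is the following pairing. For an ordered pair $(x,u)$ of distinct vertices, $n_G(x,u)$ counts vertices strictly closer to $x$ than to $u$, and $n_G(u,x)$ counts those strictly closer to $u$; since these two sets are disjoint and both omit any vertex equidistant from $x$ and $u$, we have $n_G(x,u)+n_G(u,x)\le n$. So I would first rewrite $\sum_{u}\deg(u)\sum_{x}n_G(x,u)$ by summing over ordered pairs: it equals $\sum_{\{x,u\}}\big(\deg(u)\,n_G(x,u)+\deg(x)\,n_G(u,x)\big)+(\text{diagonal terms, }O(n^2))$. For a fixed unordered pair $\{x,u\}$, writing $p=n_G(x,u)$, $q=n_G(u,x)$ with $p+q\le n$, and $\deg(x),\deg(u)\le n$, the bracket is at most $n(p+q)\le n^2$. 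Summing over the $\binom n2<\frac12 n^2$ unordered pairs gives at most $\frac12 n^2\cdot n^2=\frac12 n^4$ — still a factor of $2$ off. So this pairing alone is not enough, and I would need to also pair up the two edges in a natural way, or use that $\deg(u)\,p+\deg(x)\,q\le n\max(p,q)+n\max(p,q)$ is wasteful; a better bound is $\deg(u)p+\deg(x)q\le n(p+q)$ but we are really free to also note $p\le n-1$ individually.

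The cleanest route to the extra factor of $2$ is probably to go back to the original sum $\sum_{\{u,v\}\in E}\sum_{x\in V-\{u,v\}}(n_G(x,u)+n_G(x,v))$ directly and pair each edge $\{u,v\}$ together with the term $x$ in a triple $(u,v,x)$: for a fixed set of three vertices $\{u,v,x\}$, there are three edges one could draw, but only the edges actually present in $G$ contribute, and for the contribution of $\{u,v\}$ with apex $x$ one has $n_G(x,u)\le n-1$ and similarly — no, the real win is that summing $n_G(x,u)+n_G(x,v)+n_G(u,x)+n_G(u,v)+\cdots$ over all six ordered pairs inside $\{u,v,x\}$, grouped as three complementary pairs, is at most $3n$. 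I would therefore bound $\espr(G)=\sum_{\{u,v\}\in E}\sum_x (n_G(x,u)+n_G(x,v))\le \sum_{\{u,v,x\}}\big[(n_G(x,u)+n_G(u,x))+(n_G(x,v)+n_G(v,x))\big]\le \binom n3\cdot 2n<\frac{n^3}{6}\cdot 2n=\frac{n^4}{3}$, which is still not $\frac14$; so the final step, and the main obstacle, is to squeeze out the sharp constant $\frac14$ — likely by a more careful accounting that for each unordered pair $\{u,v\}$ the double sum $\sum_x(n_G(x,u)+n_G(x,v))$ telescopes against $\sum_x(n_G(x,v)+n_G(x,u))$ from a ``mirror'' term, or by optimizing the bound $\deg(u)\le n$ against the constraint $\sum_u \deg(u)=2|E|\le n(n-1)$, i.e.\ using $\sum_u \deg(u)\,c_u\le (\max_u c_u)\sum_u\deg(u)$ only after showing $\max_u \sum_x n_G(x,u)$ and the degree sum cannot both be maximal. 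I expect the intended proof is short and uses Lemma~3.1 together with $\sum_u\deg(u)=O(n^2)$ and $\sum_x n_G(x,u)\le \binom n2 + \text{something}$, giving $\espr(G)\le \frac12 n^2\cdot\frac12 n^2=\frac14 n^4$; pinning down exactly which two factors of $\frac12$ combine is the crux.
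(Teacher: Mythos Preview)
Your proposal does not arrive at a complete proof: the best bound you actually establish is $\espr(G)\le \tfrac13 n^4$, and the closing paragraph is speculation about where two factors of $\tfrac12$ might come from, not an argument. The approaches you try (the pairing $n_G(x,u)+n_G(u,x)\le n$, and the triple-counting over $\{u,v,x\}$) are symmetry arguments that treat $\deg(u)$ and $n_G(x,u)$ as essentially independent; that is precisely why they stall short of $\tfrac14$.

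The missing idea is a direct pointwise link between $\deg(u)$ and $n_G(x,u)$: for any $x\ne u$, every neighbor $w$ of $u$ has $d(w,u)=1$, so $w$ can be strictly closer to $x$ than to $u$ only if $w=x$; together with the fact that $u$ itself is never counted, this gives $n_G(x,u)\le n-\deg(u)$. Plugging this into Lemma~3.1 yields
\[
\espr(G)\le \sum_{u\in V}\deg(u)\sum_{x\in V}\bigl(n-\deg(u)\bigr)
= n\sum_{u\in V}\deg(u)\bigl(n-\deg(u)\bigr)
\le n\sum_{u\in V}\frac{n^2}{4}
= \frac{1}{4}n^4,
\]
using $t(n-t)\le n^2/4$ in the last inequality. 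This is the paper's argument; the point is that the trade-off is per vertex $u$ (high degree forces $n_G(\cdot,u)$ to be small), not per pair or per triple as you were attempting.
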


\begin{proof}
Let $G$ be any connected graph with $n$ vertices. Again by Lemma 3.1,
\begin{align*}
\mathrm{espr}(G)&=\sum\limits_{u\in V}\deg(u)\sum\limits_{x\in V}n_G(x,u)-O(n^3)\\
&\leq\sum\limits_{u\in V}\deg(u)\sum\limits_{x\in V}(n-\deg(u))-O(n^3)\\
&=\sum\limits_{u\in V}\deg(u)(n)(n-\deg(u))-O(n^3)\\
&\leq\sum\limits_{u\in V}\frac14n^3-O(n^3)\\
&=\frac14n^4-O(n^3)
\end{align*}
\end{proof}

The above result is an improvement from the $n^4$ upper bound on $\espr(G)$ given in Corollary 6.7 of \cite{gt22}.

\subsection{Diameter-specific extremal results}\label{esprdiam2}

\begin{thm}
The maximum value of $\mathrm{espr}(G)$ over all $n$-vertex graphs $G$ of diameter $2$ is $\frac4{27}n^4-O(n^3)$.
\end{thm}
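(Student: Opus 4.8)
The plan is to combine Lemma 3.1 with the rigid distance structure of a diameter-$2$ graph. Write $N(u)$ for the neighborhood of a vertex $u$ and $N[u]=N(u)\cup\{u\}$ for its closed neighborhood. The first and most important step is the observation that if $\diam(G)\le 2$ and $x\ne u$, then a vertex $y$ lies strictly closer to $x$ than to $u$ exactly when $y=x$ or $y\in N(x)\setminus N[u]$; hence $n_G(x,u)=1+|N(x)\setminus N[u]|$. Summing over all $x$ and switching the order of summation in the resulting double sum gives $\sum_{x\in V}n_G(x,u)=(n-1)+\sum_{y\notin N[u]}\deg(y)$. Since $\sum_{u\in V}\deg(u)(n-1)=O(n^3)$, Lemma 3.1 then reduces the problem to estimating
\[\espr(G)=\sum_{u\in V}\deg(u)\sum_{y\notin N[u]}\deg(y)+O(n^3)=2\sum_{u\not\sim y,\ u\ne y}\deg(u)\deg(y)+O(n^3),\]
where the last sum runs over unordered pairs of distinct non-adjacent vertices.

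For the upper bound I would apply $2\deg(u)\deg(y)\le\deg(u)^2+\deg(y)^2$ to each non-adjacent pair. This symmetrization decouples the double sum: the coefficient $\deg(u)^2$ is collected once for each of the $n-1-\deg(u)$ non-neighbors of $u$, so
\[\espr(G)\le\sum_{u\in V}\deg(u)^2\bigl(n-1-\deg(u)\bigr)+O(n^3)\le n\cdot\max_{0\le t\le n}t^2(n-t)+O(n^3)=\frac{4}{27}n^4+O(n^3),\]
since $t^2(n-t)$ attains its maximum $\frac{4}{27}n^3$ at $t=\frac{2n}{3}$.

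For the matching lower bound I would take $G$ to be a balanced complete tripartite graph $K_{a,b,c}$ with $a,b,c=\frac n3+O(1)$, which exists and has diameter $2$ for every large $n$; equivalently, any roughly $\frac{2n}{3}$-regular graph works, since minimum degree exceeding $n/2$ already forces diameter at most $2$. In such a graph every vertex has degree $\frac{2n}{3}+O(1)$, and the non-neighbors of a vertex are precisely the other vertices of its own part, whence $\sum_{y\notin N[u]}\deg(y)=\frac{2n^2}{9}+O(n)$ for each $u$. Substituting into the identity above yields $\espr(G)=\frac{4}{27}n^4-O(n^3)$, which together with the upper bound proves the theorem.

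The only genuinely new ingredient is the diameter-$2$ distance identity $n_G(x,u)=1+|N(x)\setminus N[u]|$; once it is available, the rest is the one-line convexity estimate above, so I do not anticipate a serious obstacle. The delicate points are purely bookkeeping: verifying that identity (and the summation swap) carefully, confirming that every quantity discarded along the way is genuinely $O(n^3)$, and checking that the extremal tripartite construction can be realized with nearly equal parts — and diameter exactly $2$ — for all $n$.
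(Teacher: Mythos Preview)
Your proof is correct and reaches the same reduction as the paper: both arguments use Lemma~3.1 together with the diameter-$2$ distance structure to identify $\espr(G)$, up to $O(n^3)$, with $\sum_{u\ne y,\,u\not\sim y}\deg(u)\deg(y)$ (summed over ordered non-adjacent pairs). Your identity $n_G(x,u)=1+|N(x)\setminus N[u]|$ and the summation swap are exactly equivalent to the paper's third displayed line.

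The upper-bound arguments diverge from there. The paper rewrites the non-adjacent sum as $4|E|^2-2\sum_{\{u,x\}\in E}\deg(u)\deg(x)$, invokes the known inequality that the number of length-$3$ walks is at least $\tfrac{8|E|^3}{n^2}$, and then optimizes the single-variable expression $4|E|^2-\tfrac{8|E|^3}{n^2}$ over $|E|$. Your route is more elementary: you apply $2\deg(u)\deg(y)\le\deg(u)^2+\deg(y)^2$ termwise, collapse to $\sum_u\deg(u)^2(n-1-\deg(u))$, and optimize the cubic $t^2(n-t)$ vertex by vertex. This avoids the walk-counting lemma entirely, at the cost of being a per-vertex rather than global optimization; both extract the constant $\tfrac{4}{27}$ from the same underlying cubic. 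For the lower bound, the paper simply takes ``any graph with $\deg(v)\approx\tfrac{2n}{3}$,'' while you specify $K_{a,b,c}$ and note that minimum degree above $n/2$ forces diameter $2$; the two constructions are the same in spirit.
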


\begin{proof}
Note that $2\sum\limits_{\{u,x\}\in E}\deg(u)\deg(x)$ is the number of walks of length $3$ (counted by the two middle vertices). This is known to be greater than or equal to $\frac{8|E|^3}{n^2}$, a fact that is used later in the proof.

Let $G$ be a graph with diameter less than or equal to $2$. We first prove the upper bound.
\begin{align*}
\espr(G)&=\sum\limits_{u\in V}\deg(u)\sum\limits_{w\in V}n_G(w,u)-O(n^3)\\
&=\sum\limits_{u\in V}\deg(u)\sum\limits_{w\in V}\sum\limits_{x\in V}\mathbbm1[d(x,w)<d(x,u)]-O(n^3)\\
&=\sum\limits_{u\in V}\deg(u)\sum\limits_{x\in V}\begin{cases}0&\{u,x\}\in E\\\deg(x)&\text{else}\end{cases}-O(n^3)\\
&=\sum\limits_{u\in V}\sum\limits_{x\in V}\begin{cases}0&\{u,x\}\in E\\\deg(u)\deg(x)&\text{else}\end{cases}-O(n^3)\\
&=\sum\limits_{u\in V}\sum\limits_{x\in V}\deg(u)\deg(x)-2\sum\limits_{\{u,v\}\in E}\deg(u)\deg(x)-O(n^3)\\
&\leq\sum\limits_{u\in V}\sum\limits_{x\in V}\deg(u)\deg(x)-\frac{8|E|^3}{n^2}-O(n^3)\\
&=\left(\sum\limits_{u\in V}\deg(u)\right)\left(\sum\limits_{x\in V}\deg(x)\right)-\frac{8|E|^3}{n^2}-O(n^3)\\
&=4|E|^2-\frac{8|E|^3}{n^2}-O(n^3)\\
&\leq\frac4{27}n^4-O(n^3).
\end{align*}
We now prove the lower bound of $\frac4{27}n^4-O(n^3)$.\\\\
Consider any graph, $G$, with $n$ vertices such that $\deg(v)\approx\frac23n$ for each vertex $v$. We will show that $\mathrm{espr}(G)\approx\frac{4}{27}n^4-O(n^3)$.
\begin{align*}
\mathrm{espr}(G)&=\sum\limits_{u\in V}\deg(u)\sum\limits_{x\in V}n_G(x,u)-O(n^3)\\
&=\sum\limits_{u\in V}\deg(u)\sum\limits_{x\in V}\begin{cases}0&\{u,x\}\in E\\\deg(x)&\text{else}\end{cases}-O(n^3)\\
&=\sum\limits_{u\in V}\frac23n\cdot\frac13n\cdot\frac23n-O(n^3)\\
&=\frac4{27}n^4-O(n^3).
\end{align*}
Thus, the upper and lower bounds have been proven, and the maximum value of $\mathrm{espr}(G)$ over all $n$-vertex graphs $G$ of diameter $2$ is indeed $\frac4{27}n^4-O(n^3)$.
\end{proof}

\begin{thm}
The maximum value of $\mathrm{espr}(G)$ over all bipartite $n$-vertex graphs $G$ of diameter at most $3$ is $\frac18n^4-O(n^2)$.
\end{thm}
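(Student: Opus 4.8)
My plan is to treat the two bounds separately: the lower bound will come from a single explicit graph, and the upper bound from an exact algebraic identity for $\espr$ specialized to this class. The common starting point is the reformulation
\[
\espr(G)=\sum_{w\in V}\deg(w)\,S(w)-|E|\,n,\qquad S(w):=\sum_{x\in V}n_G(x,w),
\]
which is the bipartite refinement of Lemma 3.1: it is \emph{exact} because for every edge $\{u,v\}$ of a bipartite graph the vertices $u,v$ lie in different parts, so $d(x,u)$ and $d(x,v)$ have opposite parities, hence differ by exactly $1$, hence $n_G(u,v)+n_G(v,u)=n$. This exactness is what lets us control the error down to $O(n^2)$ rather than $O(n^3)$.

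For the lower bound I would take $G=K_{\lfloor n/2\rfloor,\lceil n/2\rceil}$, which is bipartite of diameter $2\le 3$. Since no two vertices are at distance $3$, the computation of each $S(w)$ (after swapping the order of summation, $S(w)=\sum_y\#\{x:d(x,y)<d(y,w)\}$) collapses to a clean expression in the degrees, and one gets $\espr(G)=2ab(ab-1)$ with $a=\lfloor n/2\rfloor,\ b=\lceil n/2\rceil$, which is $\tfrac18 n^4-O(n^2)$.

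For the upper bound, let $G$ be bipartite with parts $A,B$ of sizes $a,b$ and diameter $\le 3$, $w\in A$. Switching summation order, $S(w)=\sum_y\#\{x:d(x,y)\le d(y,w)-1\}$, and the hypotheses pin down $d(y,w)$ for every $y$: it is $0$ if $y=w$, $1$ if $y\in N(w)$, \emph{exactly} $2$ if $y\in A\setminus\{w\}$ (even, positive, $\le 3$), and \emph{exactly} $3$ if $y\in B\setminus N(w)$. Evaluating the corresponding ball sizes and using $\sum_{y\in A}\deg(y)=|E|$ gives
\[
S(w)=|E|+a-1+nb-n\deg(w)-\sum_{y\in B\setminus N(w)}e_3(y)\ \le\ |E|+a-1+nb-n\deg(w),
\]
where $e_3(y)\ge 0$ is the number of vertices at distance $3$ from $y$. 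Plugging this (and the symmetric bound for $w\in B$) into the reformulation, the linear-in-$\deg(w)$ terms sum to $(2|E|+n^2+n-2)|E|$, and
\[
\espr(G)\le 2|E|^2+(n^2-2)|E|-n\sum_{w\in V}\deg(w)^2.
\]
Finally, Cauchy--Schwarz gives $\sum_w\deg(w)^2\ge (2|E|)^2/n$, so $\espr(G)\le -2|E|^2+(n^2-2)|E|$, a downward parabola in $|E|$ with maximum $\tfrac{(n^2-2)^2}{8}=\tfrac18 n^4-\tfrac12 n^2+\tfrac12$. Combined with the lower bound this gives $\max\espr=\tfrac18 n^4-O(n^2)$, with $K_{\lfloor n/2\rfloor,\lceil n/2\rceil}$ essentially attaining equality in every step (it has $e_3\equiv 0$, regular on each side so Cauchy--Schwarz is tight, and $|E|\approx n^2/4$).

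I expect the main obstacle to be getting the closed form for $S(w)$ exactly right: one must be careful that bipartiteness fixes the parity of $d(y,w)$ and that diameter $\le 3$ then determines it outright, so that $S(w)$ is genuinely exact up to the single nonnegative $e_3$ correction rather than merely approximate — this is exactly the point at which the $O(n^2)$ error (versus $O(n^3)$) is won or lost. Everything afterward — the degree-sum bookkeeping, discarding the $e_3$ term, Cauchy--Schwarz, and optimizing the quadratic in $|E|$ — is routine, as is the diameter-$2$ evaluation for the extremal graph.
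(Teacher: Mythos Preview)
Your proof is correct and follows essentially the same strategy as the paper: both use $K_{\lfloor n/2\rfloor,\lceil n/2\rceil}$ for the lower bound, rewrite $\espr(G)$ via $\sum_{w}\deg(w)\,S(w)$, bound $S(w)$ by casework on $d(y,w)$ (exploiting that bipartiteness plus diameter $\le 3$ pins down every distance), and finish with a convexity inequality and optimization in $|E|$. Your execution is slightly tighter---you use the exact identity $\espr(G)=\sum_{w}\deg(w)\,S(w)-|E|\,n$ (valid because $n_G(u,v)+n_G(v,u)=n$ on bipartite edges) instead of the paper's inequality $\espr(G)\le\sum_{w}\deg(w)\,S(w)$, and a single global Cauchy--Schwarz in place of the paper's per-side Jensen followed by AM--GM on $|V_1||V_2|$---but the architecture is the same.
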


\begin{proof}
The lower bound is given by the complete bipartite graph with the maximum number of edges (which has diameter 2). By Corollary 6.6 of \cite{gt22}, this graph has $\espr(G)=\left\lfloor\frac{n^2}4\right\rfloor\left(2\left\lfloor\frac{n^2}4\right\rfloor-2\right)$, which is $\frac18n^4-O(n^2)$.

Now, the following computation proves an upper bound on $\espr(G)$ over bipartite graphs $G$ with diameter at most 3. Below, $V_u$ denotes the (vertex set of the) side of the bipartite graph containing $u$, and $V_u'$ denotes the other side.

\begin{align*}
\espr(G)&\leq\sum\limits_{u\in V}\deg(u)\sum\limits_{x\in V}n_G(x,u)\\
&=\sum\limits_{u\in V}\deg(u)\sum\limits_{x\in V}\sum\limits_{y\in V}\mathbbm1[d(y,x)<d(y,u)]\\
&\leq\sum\limits_{u\in V}\deg(u)[|E|+(|V_u'|-\deg(u))n]\\
&=|E|\sum\limits_{u\in V}\deg(u)+n\sum\limits_{u\in V}\deg(u)(|V_u'|-\deg(u))\\
&=2|E|^2+n\left[\sum\limits_{u\in V_1}\deg(u)(|V_2|\deg(u))+\sum\limits_{u\in V_2}\deg(u)(|V_1|\deg(u))\right]\\
&\leq2|E|^2+n\left[\sum\limits_{u\in V_1}\frac{|E|}{|V_1|}(|V_2|\frac{|E|}{|V_1|})+\sum\limits_{u\in V_2}\frac{|E|}{|V_2|}(|V_1|\frac{|E|}{|V_2|})\right]
\end{align*}
\begin{align*}
&=2|E|^2+n\left[|V_1|\cdot\frac{|E|}{|V_1|}(|V_2|\frac{|E|}{|V_1|})+|V_2|\cdot\frac{|E|}{|V_2|}(|V_1|\frac{|E|}{|V_2|})\right]\\
&=2|E|^2+n|E|(|V_2|-\frac{|E|}{|V_1|}+|V_1|-\frac{|E|}{|V_2|})\\
&=2|E|^2+n|E|(n-\frac{n|E|}{|V_1||V_2|})\\
&\leq 2|E|^2+n|E|(n-\frac{n|E|}{n^2/4})\\
&=|E|(n^2-2|E|)\\
&\leq\frac{n^4}8.
\end{align*}

The first line is by Lemma 3.1. The sixth line is by Jensen's inequality. The third line is due to the following work.

To upper bound the quantity $\sum\limits_{x\in V}\sum\limits_{y\in V}\mathbbm1[d(y,x)<d(y,u)]$, we swap the summations and do casework on $y$.

Case 1: $d(y,u)=2$. Then $d(y,x)=1$. Since $y$ must be in $V_u$, every edge $\{x,y\}$ is counted at most once. Thus, this case contributes at most $|E|$ to the summation.

Case 2: $d(y,u)=3$. Then $y$ is in $V_u'$ but is not a neighbor of $u$. After choosing $y$, there are at most $n$ choices for $x$. Thus, this case contributes at most $(|V_u'|-\deg(u))n$.

This justifies the third line of the computation, $\sum\limits_{x\in V}\sum\limits_{y\in V}\mathbbm1[d(y,x)<d(y,u)]\leq |E|+(|V_u'|-\deg(u))n$.

\end{proof}

Although this proof is specific to bipartite graphs of diameter at most 3, we conjecture that the bound holds for all bipartite graphs.

\begin{conjecture}
The maximum value of $\espr(G)$ over all bipartite $n$-vertex graphs $G$ is $\frac18 n^4-O(n^2)$.
\end{conjecture}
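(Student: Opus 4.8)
The plan is to remove the diameter restriction from the previous theorem by controlling the contribution of vertex pairs at distance greater than $3$, which is the only place the argument used $\diam(G)\le 3$. Recall from Lemma 3.1 that $\espr(G)=\sum_{u\in V}\deg(u)\sum_{x\in V}n_G(x,u)-O(n^3)$, so it suffices to bound $\sum_{u\in V}\deg(u)\sum_{x\in V}n_G(x,u)$ by $\tfrac18 n^4-O(n^3)$. Writing $n_G(x,u)=\sum_{y\in V}\mathbbm1[d(y,x)<d(y,u)]$, I would fix $u$ and split the inner double sum over $(x,y)$ according to $d(y,u)$. The vertices $y$ with $d(y,u)\le 3$ are handled exactly as in the diameter-$3$ proof: those at distance $2$ (same side as $u$) contribute at most $|E|$, those at distance $3$ (opposite side, non-neighbour) contribute at most $(|V_u'|-\deg(u))n$, and those at distance $0$ or $1$ contribute $0$ since no $y$ on the $u$-side of $u$ itself, and no neighbour $y$, can be strictly closer to some $x$ than to $u$ in a way that survives — more precisely $d(y,x)<d(y,u)$ forces $d(y,u)\ge 2$. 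The new ingredient is bounding the contribution of $y$ with $d(y,u)\ge 4$.

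The key observation for large distances is a parity/layering fact in bipartite graphs: if $d(y,u)=k$, then every $x$ with $d(y,x)<d(y,u)=k$ and $\{u,x\}\notin E$ must satisfy $d(x,u)\ge 2$, and by the triangle inequality $d(y,x)\ge k-d(x,u)$; combined with bipartiteness (so $d(y,x)$ and $d(x,u)$ have opposite parities to... ) one gets that such $x$ lie in a bounded number of BFS layers around $u$. I would aggregate over all $u$ and all $y$ with $d(y,u)\ge 4$ by a counting argument: the total number of triples $(u,x,y)$ with $d(y,x)<d(y,u)$ and $d(y,u)\ge 4$, weighted by $\deg(u)$, is $O(n\cdot |E|\cdot \ell)$ where $\ell$ bounds the number of relevant layers, which will be $O(n^3)$ or at worst absorbed into the slack once one notices that such contributions force $u$ to have small degree or force $y$ to be far from the bulk. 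An alternative, possibly cleaner route: show that the set of edges $\{u,v\}$ with $\eperi$-type contributions coming from distance-$\ge4$ witnesses is small, or decompose $G$ by its BFS structure from a diametral vertex. Either way, the target is to show $\sum_{u}\deg(u)\cdot(\text{contribution of far }y)=O(n^3)$.

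Once the far-contribution is shown to be $O(n^3)$, the rest of the computation is verbatim the chain of inequalities in the proof of the diameter-$\le 3$ case: $\espr(G)\le 2|E|^2 + n|E|\bigl(n-\tfrac{n|E|}{|V_1||V_2|}\bigr)-O(n^3)\le |E|(n^2-2|E|)-O(n^3)\le \tfrac18 n^4-O(n^3)$, using $|V_1||V_2|\le n^2/4$ and $|E|\le n^2/4$, together with Jensen's inequality for the step $\sum_{u\in V_i}\deg(u)^2\ge |V_i|(|E|/|V_i|)^2$. The matching lower bound is already supplied by the complete bipartite graph $K_{\lfloor n/2\rfloor,\lceil n/2\rceil}$, which has diameter $2$ and $\espr=\tfrac18 n^4-O(n^2)$ by Corollary 6.6 of \cite{gt22}, so the conjecture would follow.

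I expect the main obstacle to be precisely the distance-$\ge 4$ bookkeeping: it is conceivable that for graphs with large diameter the ``far'' witnesses $x$ for a pair $(u,y)$ are genuinely numerous (up to $\Theta(n)$ of them), so that the naive bound gives $\Theta(n^4)$ rather than $O(n^3)$ and one must exploit more carefully that a vertex $y$ with $d(y,u)\ge 4$ for many $u$ of large degree cannot exist in a graph whose edge count is near $n^2/4$ (such dense bipartite graphs have diameter $\le 3$ whenever both sides are nonempty, essentially). The crux is thus a dichotomy: either $G$ is ``close to'' having diameter $3$, in which case the far contribution is negligible, or $G$ is sparse enough that the trivial bound $\espr(G)\le O(n)\cdot|E|\cdot n = O(n^2|E|)$ is already well below $\tfrac18 n^4$. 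Making this dichotomy quantitative and checking the boundary case carefully is where the real work lies.
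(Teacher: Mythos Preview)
The paper does not prove this statement; it is stated as an open conjecture (Conjecture 4.6) immediately after the diameter-$\le 3$ result, with the remark that ``this proof is specific to bipartite graphs of diameter at most 3.'' So there is no proof in the paper to compare against, and your proposal should be read as an attempt to settle an open problem.

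Your plan has a genuine gap precisely where you suspect it does, and the gap is fatal to the approach as written. The hope that the ``far'' contribution $\sum_u \deg(u)\sum_{y:\,d(y,u)\ge 4}\#\{x:d(y,x)<d(y,u)\}$ is $O(n^3)$ is false in general. Take two copies of $K_{n/4,n/4}$ joined by a short path: every vertex has degree $\approx n/4$, and for each $u$ in one copy, every $y$ in the other copy is at distance $\ge 4$ and has $\approx n/2$ witnesses $x$ closer to $y$. That alone contributes $\Theta(n)\cdot\Theta(n)\cdot\Theta(n)\cdot\Theta(n)=\Theta(n^4)$ to the weighted sum, so the far term cannot be absorbed into the $O(n^3)$ error. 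The dichotomy you propose also does not close: the trivial bound $\espr(G)\le 2n^2|E|$ only beats $\tfrac18 n^4$ when $|E|\le n^2/16$, but bipartite graphs with $|E|$ anywhere in $(n^2/16,\,n^2/4)$ can certainly have diameter $>3$ (the dumbbell above has $|E|\approx n^2/8$). So there is a wide intermediate density range where neither branch of your dichotomy applies, and any successful proof would have to treat the far contribution and the near contribution together rather than bounding them separately. You have correctly identified the obstruction, but it is not merely a matter of ``checking the boundary case carefully'' --- a genuinely new idea is needed, which is presumably why the paper leaves this as a conjecture.
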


\section{Peripherality Index}\label{peri}

We compute the exact value of the maximum peripherality index over connected $n$-vertex graphs and over $n$-vertex trees for $n\leq 8$. The authors of \cite{gt22} prove that this maximum is given by $\binom n2$ for $n\geq 9$ (for both trees an connected graphs).

We first establish the following lemma.

\begin{lemma}
Any $n$-vertex graph, $G$, with a nontrivial automorphism has $\peri(G)<\binom n2$.
\end{lemma}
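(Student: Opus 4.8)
The plan is to exploit the symmetry of a nontrivial automorphism $\sigma$ to force the peripherality contribution of many ordered pairs of vertices to cancel or vanish. Recall that $\peri(G)$ counts ordered pairs $(u,v)$ with $n_G(u,v) > n_G(v,u)$; equivalently, if for a pair of distinct vertices $\{u,v\}$ we say the pair is \emph{balanced} when $n_G(u,v) = n_G(v,u)$, then $\peri(G) \le \binom n2$ always, with equality iff no pair is balanced (since each unordered pair contributes at most one ordered pair to the count). So the statement reduces to: a graph with a nontrivial automorphism always has at least one balanced pair of vertices.

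First I would pick a nontrivial automorphism $\sigma$ of $G$, so there is some vertex $w$ with $\sigma(w) \ne w$. The key observation is that $\sigma$ preserves distances: $d(x,y) = d(\sigma(x), \sigma(y))$ for all $x,y$. I would try to show that the pair $\{w, \sigma(w)\}$ is balanced, i.e. $n_G(w, \sigma(w)) = n_G(\sigma(w), w)$. The set $A = \{x : d(x,w) < d(x,\sigma(w))\}$ and the set $B = \{x : d(x,\sigma(w)) < d(x,w)\}$ satisfy $\sigma(A) \subseteq$ the analogous set for the pair $(\sigma(w), \sigma^2(w))$ — which is not quite $B$ unless $\sigma^2(w) = w$. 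So the naive approach works cleanly only when $\sigma$ has a transposition, i.e. an orbit of size $2$. The fix is to instead consider the permutation restricted to the orbit of $w$: choose $w$ so that $w$ and $\sigma(w)$ lie in an orbit, and more carefully, pass to a power of $\sigma$. Since $\sigma$ is a nontrivial permutation of the finite vertex set, some power $\tau = \sigma^k$ is nontrivial and has order a prime $p$; if $p = 2$ we get a genuine transposition orbit and the pair $\{w,\tau(w)\}$ with $w$ in a $2$-orbit is balanced because $\tau$ swaps $A$ and $B$. If every nontrivial power has odd prime order, I instead argue via averaging: summing $n_G(x, y) - n_G(y, x)$ over the orbit of an edge or pair under $\langle \sigma \rangle$ gives zero, so not every pair in a nontrivial orbit can be unbalanced in the same direction, and in fact within any orbit of the pair $\{w,\sigma(w)\}$ under $\langle\sigma\rangle$ the signed contributions $n_G(w,\sigma(w)) - n_G(\sigma(w),w)$, $n_G(\sigma(w),\sigma^2(w)) - n_G(\sigma^2(w),\sigma(w))$, \dots telescope around the cycle to zero, forcing at least one of them to be $0$.

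Let me restructure to the cleanest version: let $\sigma$ be a nontrivial automorphism and let $w$ be in a nontrivial orbit $O = \{w, \sigma(w), \dots, \sigma^{m-1}(w)\}$ with $|O| = m \ge 2$. Define $f(i) = n_G(\sigma^i(w), \sigma^{i+1}(w)) - n_G(\sigma^{i+1}(w), \sigma^i(w))$ for indices mod $m$. Because $\sigma$ is a distance-preserving bijection, applying $\sigma$ shows $f(i) = f(i+1)$ for all $i$, so all the $f(i)$ are equal to a common value $c$. On the other hand, $\sum_{i=0}^{m-1} f(i)$ telescopes: writing $f(i) = g(i) - g(i+1)$ where $g(i) = n_G(\sigma^i(w), \sigma^{i+1}(w))$ is not quite symmetric, I need the cleaner identity that $n_G(a,b) - n_G(b,a)$ summed cyclically along a closed walk is zero — which holds because $\sum_i [n_G(\sigma^i w, \sigma^{i+1} w) + n_G(\sigma^{i+1}w, \sigma^i w)]$ counts the same thing forwards and backwards around the cycle. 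Concretely, $\sum_i n_G(\sigma^i w, \sigma^{i+1}w) = \sum_i n_G(\sigma^{i+1}w, \sigma^{i+2}w) = \sum_i n_G(\sigma^i w, \sigma^{i-1}w)$, the last by reindexing, so $\sum_i f(i) = 0$, hence $mc = 0$, hence $c = 0$, so every consecutive pair in the orbit is balanced. In particular $\{w, \sigma(w)\}$ is a balanced pair of distinct vertices, so $\peri(G) < \binom n2$.

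The main obstacle I anticipate is making the cyclic/telescoping cancellation argument fully rigorous — in particular verifying that $n_G(\sigma^i w, \sigma^{i+1} w)$ does \emph{not} depend on $i$ (immediate from $\sigma$ being a distance-automorphism) versus the separate fact that the signed sum around the orbit vanishes; reconciling "all equal" with "sum to zero" is exactly what forces the value to be $0$, and I should double-check there is no degenerate case (e.g. $\sigma(w)$ not adjacent to $w$, or the orbit having length $2$, where the argument is even more direct since $\sigma$ simply swaps the two sets defining $n_G(w,\sigma w)$ and $n_G(\sigma w, w)$). I expect the orbit-length-$2$ case to be the trivial base case and the general cyclic case to need the telescoping identity above.
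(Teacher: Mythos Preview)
Your reduction is exactly the paper's: it suffices to exhibit a single unordered pair $\{u,v\}$ with $n_G(u,v)=n_G(v,u)$, and both you and the paper take $\{u,v\}=\{w,\sigma(w)\}$ for a nontrivial automorphism $\sigma$. The paper simply asserts $n_G(u,v)=n_G(v,u)$ at that point; you correctly notice that the obvious bijection $x\mapsto\sigma(x)$ only sends $\{x:d(x,w)<d(x,\sigma w)\}$ to $\{y:d(y,\sigma w)<d(y,\sigma^2 w)\}$, which is the desired set only when $\sigma^2(w)=w$. So your instinct that something more is needed beyond the paper's one-line assertion is well-founded, and your order-$2$ case is fine.

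The problem is your general ``telescoping'' step. You set $f(i)=n_G(w_i,w_{i+1})-n_G(w_{i+1},w_i)$ with $w_i=\sigma^i w$, correctly deduce from the automorphism that $f(i)$ is a constant $c$, and then claim $\sum_i f(i)=0$ via the chain
\[
\sum_i n_G(w_i,w_{i+1})=\sum_i n_G(w_{i+1},w_{i+2})=\sum_i n_G(w_i,w_{i-1}).
\]
The last equality is not a reindexing: substituting $j=i+1$ (or any shift) in $\sum_i n_G(w_{i+1},w_{i+2})$ gives back $\sum_j n_G(w_j,w_{j+1})$, never $\sum_j n_G(w_j,w_{j-1})$. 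No substitution turns the pair of subscripts $(i+1,i+2)$ into $(j,j-1)$. Worse, since the automorphism already forces $\sum_i n_G(w_i,w_{i+1})=mA$ and $\sum_i n_G(w_{i+1},w_i)=mB$ for constants $A,B$, your claim $\sum_i f(i)=0$ is literally $mA=mB$, i.e.\ $c=0$, which is the conclusion. The argument is circular. (Concretely, for an orbit of size $3$ and a vertex $x$ with distance vector $(d(x,w_0),d(x,w_1),d(x,w_2))=(1,2,3)$, the three cyclic shifts contribute $\mathrm{sgn}(2-1)+\mathrm{sgn}(3-2)+\mathrm{sgn}(1-3)=+1$ to $\sum_i f(i)$, not $0$.) So the cyclic cancellation you are relying on does not hold vertex-by-vertex and is not forced by reindexing; you would need an additional argument to get $c=0$, and none is given.
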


\begin{proof}
Let $u$ be a vertex that is mapped to a different vertex, $v$, by some nontrivial automorphism on $\text{spr}(G)$.\\\\
Then $n_G(u,v)=n_G(v,u)$. It follows that $\peri(G)=\sum\limits_{u\neq v}\mathbbm1[n_G(u,v)\neq n_G(v,u)]<\binom n2$, proving the lemma.
\end{proof}

\begin{thm}
The maximum value of the peripherality index over connected $n$-vertex graphs and over $n$-vertex trees for $n\leq 8$ is given by the following table:
\end{thm}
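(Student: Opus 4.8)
The statement is a finite verification: for each $n \leq 8$ one must report the maximum of $\peri(G)$ as $G$ ranges over connected graphs on $n$ vertices, and separately over trees on $n$ vertices. My plan is to combine an exhaustive computer search with the preceding lemma, which confines the top of the range. First I would generate, for each $n \leq 8$, a complete isomorph-free list of connected graphs on $n$ vertices and of trees on $n$ vertices --- for example with \texttt{nauty}'s \texttt{geng} and \texttt{gentreeg}, or from published catalogues; the lists are small (at most $11117$ connected graphs and $23$ trees when $n = 8$). For each listed graph $G$ I would compute all pairwise distances by running breadth-first search from every vertex, then form $n_G(u,v) = |\{x \in V : d(x,u) < d(x,v)\}|$ for every ordered pair, then $\peri(v) = |\{u : n_G(u,v) > n_G(v,u)\}|$, and finally $\peri(G) = \sum_{v \in V} \peri(v)$. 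The maximum of these values over each family is the corresponding table entry, and recording one graph that attains it gives the matching lower bound in each case.

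For the upper bounds, the preceding lemma (that any graph with a nontrivial automorphism satisfies $\peri(G) < \binom{n}{2}$) does the conceptual work at the extreme value: $\peri(G) = \binom{n}{2}$ would force $n_G(u,v) \neq n_G(v,u)$ for every unordered pair $\{u,v\}$, which the lemma rules out unless $G$ is asymmetric. Since the smallest asymmetric connected graph has six vertices and the smallest asymmetric tree has seven, it follows with no computation that the maximum is strictly less than $\binom{n}{2}$ for all connected graphs with $n \leq 5$ and for all trees with $n \leq 6$; and a check of the (comparatively few) asymmetric graphs on $6$, $7$, and $8$ vertices settles whether the value $\binom{n}{2}$ is attained in the remaining cases. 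For the exact values below $\binom{n}{2}$, however, one cannot restrict attention to asymmetric graphs --- as the $4$-vertex ``paw'' already shows, a graph with a nontrivial automorphism can be the maximizer --- so the stated upper bounds ultimately rest on the full enumeration above.

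I expect the only real difficulty to be bookkeeping rather than mathematics. One must ensure the isomorph-free generation is genuinely complete so that no graph is skipped, that connectivity (resp.\ the tree property) is correctly enforced, and that $n_G$ and $\peri$ are implemented exactly as defined --- noting in particular that the vertex $x = v$ never contributes to $n_G(\cdot, v)$ comparisons, so its inclusion or exclusion is immaterial. As sanity checks I would confirm the trivial cases by hand ($n = 1, 2$ give $0$, and $n = 3$ gives $2$, realized by $P_3$), verify that every table entry is at most $\binom{n}{2}$, and check that the entries approach $\binom{n}{2}$ from below consistently with the result of \cite{gt22} that the maximum equals $\binom{n}{2}$ for all $n \geq 9$.
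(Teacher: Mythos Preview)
Your proposal is correct and would yield a complete verification, but your route differs from the paper's. You lean almost entirely on exhaustive enumeration, invoking the lemma only as a sanity check and to decide whether $\binom{n}{2}$ is attainable. The paper instead argues case by case with as little enumeration as it can: for $n=5$ it uses the (cited as known) fact that every $5$-vertex graph has a nontrivial automorphism to get $\peri(G)\le \binom{5}{2}-1=9$ directly; for $n=6$ trees it \emph{strengthens} the lemma, observing that an automorphism moving two disjoint pairs forces $\peri(T)\le \binom{6}{2}-2=13$, and then checks by hand the single $6$-vertex tree (the spider with legs $1,1,3$) whose automorphism group fixes all but one pair; for $n=8$ trees it reduces to the unique asymmetric $8$-vertex tree (the spider with legs $1,2,4$). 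Explicit extremal graphs are exhibited in every case.

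What each buys: the paper's argument is human-readable and explains \emph{why} the ceiling drops by one or two in the non-asymmetric cases, but it leaves the upper bounds for $n=4$ (and implicitly the claim that its named extremal graphs are in fact maximizers) unjustified except by an unstated small search. Your enumeration is less illuminating but airtight and uniform across all $n\le 8$; in particular it actually certifies the $n=4$ values that the paper merely asserts. Both approaches ultimately rely on the same structural facts (smallest asymmetric graph on six vertices, smallest asymmetric tree on seven), and you have those right.
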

\begin{table*}[ht]
\centering
\begin{tabular}{ |c|c|c| } 
 \hline
 $n$ & $\max\peri(T)$ & $\max\peri(G)$ \\ 
 \hline
1 & 0 & 0 \\ 
2 & 0 & 0 \\
3 & 2 & 2 \\
4 & 4 & 5 \\
5 & 9 & 9 \\
6 & 13 & 15 \\
7 & 21 & 21 \\
8 & 27 & 28 \\
 \hline
\end{tabular}
\caption{The maximum value of $\peri(T)$ and $\peri(G)$ over $n$-vertex trees $T$ and connected $n$-vertex graphs, $G$, for $1\leq n\leq 8$.}
\label{table:1}
\end{table*}

\begin{proof}
For $n=1$ and $n=2$, the only allowed graphs have a peripherality index of $0$.

For $n=3$, the maximum in both cases is achieved by the three-vertex tree.

For $n=4$, the maximum over trees is achieved by the $4$-vertex path, and the maximum over connected graphs is given by a triangle with a pendent vertex.

For $n=5$, note that any $5$-vertex graph, $G$, has a nontrivial automorphism. Thus, by the Lemma, $\peri(G)\leq\binom52-1=9$. Equality is achieved by the spider with legs of length 1, 1, and 2.

For $n=6$ over connected graphs, $\peri(G)=\binom62=15$ is achieved by the graph with vertices $1,2,\dots,6$ and edges $\{\{1,2\}, \{2,3\}, \{3,4\}, \{4,5\}, \{5,6\}, \{3,5\}\}$. For $n=6$ over trees, every tree has a nontrivial automorphism. Of these trees, if some automorphism maps $u_1$ to $v_1$ and $u_2$ to $v_2$ for distinct $u_1,u_2,v_1,v_2$, then we have $n_G(u_1,v_1)=n_G(v_1,u_1)$ and $n_G(u_2,v_2)=n_G(v_2,u_2)$, so by similar reasoning to that used to prove the Lemma, $\peri(T)\leq\binom62-2=13$. The only tree that is not taken into account by the above argument is the spider with legs of length 1, 1, and 3, which has an index of $12$. $\spr(T)=13$ is achieved by the spider with legs of length 1, 2, and 2.

For $n=7$, the balanced spider with legs of length 1, 2, and 3 achieves $\peri(G)=\binom72=21$.

For $n=8$ over connected graphs, $\peri(G)=\binom82=28$ is achieved by the graph with vertices $1,2,\dots,8$ and edges $\{\{1,2\}, \{2,3\}, \{3,4\}, \{4,5\}, \{5,6\}, \{6,7\}, \{7,8\}, \{4,6\}\}$. For $n=8$ over trees, the spider with legs of length 1, 1, 2, and 3 achieves $\peri(T)=27$. To see why $\peri(T)=\binom72=28$ is not achievable, by the Lemma, it suffices to check trees with no nontrivial automorphism. The only such tree is the spider with legs of length 1, 2, and 4, which has $\peri(T)<28$.
\end{proof}

\section{Trinajstić Index}\label{NTS}

Conjecture 3.2 of \cite{NT} suggests that every graph with a Trinajstić index of 0 must be regular. Conjecture 3.3 suggests that the maximal value of the Trinajstić index over all $n$-vertex graphs $G$ is achieved by ``a complete subgraph $K_{\lceil n/2\rceil}$, on whose vertices are attached $\lfloor n/2\rfloor$ pendent vertices." Both conjectures were based on a computer search of all graphs with $5$ to $10$ vertices. In this section, we supply a small counterexample to Conjecture 3.3 and prove that the conjecture is also false for infinitely many large values of $n$. In order to do so, we first show that the maximum value of $NT(G)$ over all graphs $G$ is $(0.5-o(1))n^4$. Additionally, we present a class of counterexamples to Conjecture 3.2.

Conjecture 3.3 of \cite{NT} states the following:

\begin{conjecture}
The graph with the maximal value of the Trinajstic index is unique. It is consisting of a complete subgraph $K_{\lceil n/2\rceil}$, on whose vertices are attached $\lfloor n/2\rfloor$ pendent vertices. One pendent vertex is allowed per vertex that belongs to the complete subgraph. \cite{NT}
\end{conjecture}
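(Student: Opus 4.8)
The plan is to refute Conjecture 3.3 by first pinning down the asymptotic maximum of $NT(G)$ over $n$-vertex graphs and then checking that Furtula's conjectured graph falls short of it. For the upper bound I would use only the elementary facts that $n_G(u,v)\ge 1$ for every ordered pair (a vertex is strictly closest to itself, so $d(u,u)<d(u,v)$) and that $n_G(u,v)+n_G(v,u)\le n$ (the ``closer to $u$'' and ``closer to $v$'' vertex sets are disjoint). Together these give $|n_G(u,v)-n_G(v,u)|\le n-2$, so $NT(\{u,v\})\le(n-2)^2$, and summing over all $\binom n2$ pairs yields $NT(G)\le\binom n2(n-2)^2=\tfrac12 n^4-O(n^3)$.

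For the matching lower bound I would exhibit an explicit family. Fix $\ell=\ell(n)$ with $\ell\to\infty$ but $\ell=o(n)$ (say $\ell=\lfloor\log n\rfloor$), and let $G_n$ be the spider with a center $c$ and $k\approx n/\ell$ legs, each leg a path on $\ell$ vertices joined to $c$ by one endpoint, so $n=k\ell+1$ (any at-most-$\ell$ leftover vertices can be appended to one leg without affecting the asymptotics). The crucial observation is that if $u$ lies at distance $a$ from $c$ on one leg and $v$ at distance $b>a$ from $c$ on a different leg, then every vertex $x$ lying off both of those legs satisfies $d(x,u)=d(x,c)+a<d(x,c)+b=d(x,v)$; a short case check on the at most $2\ell$ vertices of the two legs involved then gives $n_G(u,v)=n-O(\ell)$ and $n_G(v,u)=O(\ell)$, hence $NT(\{u,v\})=n^2(1-o(1))$. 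The number of vertex pairs lying on two distinct legs with $a\ne b$ equals $\binom k2(\ell^2-\ell)=\tfrac12 n^2(1-o(1))$, where $\ell\to\infty$ is exactly what makes the $a=b$ pairs a negligible fraction; therefore $NT(G_n)\ge(\tfrac12-o(1))n^4$, and with the upper bound the maximum of $NT(G)$ is $(0.5-o(1))n^4$.

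It then remains to compute $NT(G^*)$ for the conjectured extremal graph $G^*$: a clique $K_m$ with $m=\lceil n/2\rceil$ carrying one pendant vertex on each of $\lfloor n/2\rfloor$ of its vertices. All distances in $G^*$ are $1$, $2$, or $3$, and a direct computation shows that a pair of two clique vertices or two pendants contributes $0$ by symmetry, a clique vertex paired with its own pendant contributes $(2m-2)^2$, and a clique vertex paired with a foreign pendant contributes $(2m-3)^2$; summing, $NT(G^*)=m(2m-2)^2+m(m-1)(2m-3)^2=\tfrac14 n^4+O(n^3)$. Since $\tfrac14 n^4$ is bounded away from $(\tfrac12-o(1))n^4$, the graph $G^*$ is not extremal for any sufficiently large $n$, refuting Conjecture 3.3 for infinitely many $n$; a separately exhibited small graph (outside the $n\le 10$ range of Furtula's computer search) shows the conjecture already fails at a small value of $n$. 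The step I expect to be the main obstacle is the asymptotic analysis of the spider family --- verifying cleanly that a typical cross-leg pair has one endpoint closer to all but $O(\ell)$ of the vertices, in particular controlling the vertices on the two legs involved and confirming that the equidistant vertices are negligible in number, while simultaneously meeting the two opposing constraints $\ell\to\infty$ and $\ell=o(n)$.
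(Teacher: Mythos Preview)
Your proposal is correct and follows essentially the same approach as the paper: the upper bound $\binom{n}{2}(n-2)^2$, the balanced-spider lower bound giving $(0.5-o(1))n^4$, and the computation showing Furtula's graph $G^*$ only reaches $\tfrac14 n^4+O(n^3)$ are all exactly the paper's argument, with your treatment of $NT(G^*)$ being somewhat more detailed than the paper's one-line estimate. The only discrepancy is the small counterexample: the paper uses $n=4$ (the star $S_3$ beats the path $P_4$), which lies \emph{below} Furtula's search range of $5\le n\le 10$, rather than above it as you suggest.
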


\begin{thm}
Conjecture 5.1 is false for $n=4$.
\end{thm}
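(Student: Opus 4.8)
The plan is a direct computation. First I would identify the graph $H$ that Conjecture 5.1 singles out when $n=4$: since $\lceil 4/2\rceil=\lfloor 4/2\rfloor=2$, it is a $K_2$ with exactly one pendent vertex attached to each of its two vertices, i.e.\ $H$ is the path on four vertices $a$--$b$--$c$--$d$. Then I would compute $NT(H)$ by evaluating $n_G(u,v)$ and $n_G(v,u)$ for each of the $\binom 42=6$ unordered pairs. The contributing pairs are the two ``end'' pairs $\{a,b\}$ and $\{c,d\}$, each with $(n_G-n_G')^2=(3-1)^2=4$, and the two pairs $\{a,c\}$, $\{b,d\}$, each contributing $(2-1)^2=1$; the pairs $\{a,d\}$ and $\{b,c\}$ contribute $0$. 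This yields $NT(H)=10$.

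Next I would exhibit a $4$-vertex graph with strictly larger Trinajsti\'c index, namely the star $K_{1,3}$ with centre $z$ and leaves $x,y,w$. For each of the three centre--leaf pairs, the centre is strictly closer to $3$ of the $4$ vertices and the leaf to exactly $1$, so each such pair contributes $(3-1)^2=4$; each of the three leaf--leaf pairs contributes $0$ because the two leaves are interchangeable by an automorphism (so $n_G(u,v)=n_G(v,u)$ there). Hence $NT(K_{1,3})=12>10=NT(H)$, and the graph named in Conjecture 5.1 is not the maximizer of $NT$ among $4$-vertex graphs, so the conjecture fails for $n=4$. (One could additionally remark, though it is not needed, that a brief inspection of the finitely many connected graphs on $4$ vertices shows $K_{1,3}$ is in fact the unique maximizer, with value $12$.)

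There is essentially no obstacle here beyond bookkeeping. The two points to watch are (i) reading off the conjectured graph correctly for the relevant parity of $n$ (for $n=4$ it degenerates to $P_4$), and (ii) handling ties carefully when evaluating the $n_G$ terms — a vertex equidistant from $u$ and $v$ is counted by neither $n_G(u,v)$ nor $n_G(v,u)$ — since miscounting these is the only realistic source of error in such a small computation.
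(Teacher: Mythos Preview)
Your proposal is correct and follows exactly the paper's approach: identify the conjectured graph for $n=4$ as $P_4$, compute $NT(P_4)=10$, and exhibit the star $K_{1,3}$ (called $S_3$ in the paper) with $NT(K_{1,3})=12>10$. Your write-up simply fills in the arithmetic that the paper states without detail.
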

\begin{proof}
For $n=4$, the graph described in Conjecture 3.3 is the path $P_4$, which has a Trinajstić index of 10. The star graph $S_3$ has a Trinajstić index of $12>10$, disproving the conjecture.
\end{proof}

However, the conjecture is not just false for one value of $n$; it is false for infinitely many, because the graph that it describes does not achieve the asymptotic maximum of $(0.5-o(1))n^4$ which is proven below.

\begin{thm}
The maximum value of $NT(G)$ over all graphs $G$ is $(0.5-o(1))n^4$.
\end{thm}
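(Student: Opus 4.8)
The plan is to establish matching bounds, each within a $(1-o(1))$ factor of $\tfrac12 n^4$. For the upper bound I would note that for distinct $u,v$ the vertex sets counted by $n_G(u,v)$ and $n_G(v,u)$ are disjoint, so $n_G(u,v)+n_G(v,u)\le n$; since $(a-b)^2\le(a+b)^2$ for $a,b\ge 0$, every summand of $NT(G)$ is at most $n^2$. As there are $\binom n2<\tfrac12 n^2$ pairs, $NT(G)<\tfrac12 n^4$ for every $n$-vertex graph, which already gives the upper half of the estimate.

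For the lower bound I would use a spider: a central vertex $c$ to which are attached $k$ disjoint paths (``legs''), each on $\ell$ vertices; write $v_{a,p}$ for the vertex of leg $a$ at distance $p$ from $c$. The crucial point is a \emph{domination} property: if $a\ne b$ and $p<q$, then every vertex lying outside leg $b$ is strictly closer to $v_{a,p}$ than to $v_{b,q}$. This follows from a short case check — for $x$ off legs $a$ and $b$ a shortest path to either target runs through $c$, so one compares $d(x,c)+p$ with $d(x,c)+q$; for $x=v_{a,r}$ one compares $|r-p|$ with $r+q$, and $p<q$ forces the former to be smaller. Consequently $n_G(v_{a,p},v_{b,q})\ge n-\ell$ while $n_G(v_{b,q},v_{a,p})\le\ell$, so this pair contributes at least $(n-2\ell)^2$ to $NT$. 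Calling such a pair (with $a\ne b$ and $p\ne q$) \emph{good}, there are exactly $\tfrac12 k\ell(k-1)(\ell-1)$ good pairs.

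It then remains to choose parameters. Taking $\ell=\lfloor\sqrt n\rfloor$ and $k=\lfloor(n-1)/\ell\rfloor$, and attaching the remaining fewer than $\sqrt n$ vertices as extra pendants at $c$ (which only increases $NT$ and does not spoil domination), one has $k,\ell\to\infty$, $\ell=o(n)$, and $k\ell=n-o(n)$. Then the number of good pairs is $\tfrac12(k\ell)^2(1-o(1))=\tfrac12 n^2(1-o(1))$ and each contributes at least $(n-2\ell)^2=n^2(1-o(1))$, so $NT(G)\ge\tfrac12 n^4(1-o(1))=(0.5-o(1))n^4$. Combined with the upper bound this proves the theorem; as a byproduct one checks that Furtula's graph $K_{\lceil n/2\rceil}$ with $\lfloor n/2\rfloor$ pendants has $NT=\Theta(n^4)$ with leading constant only about $\tfrac1{16}$, so it is not asymptotically extremal, which is the source of the large-$n$ counterexamples to the conjecture.

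The only real content is the domination claim for cross-leg pairs of the spider; everything else is the routine bookkeeping of tracking the $(1-o(1))$ errors coming from the floor functions and the padding vertices. I expect verifying domination — together with arranging that both $k$ and $\ell$ grow while the legs stay of sublinear size — to be the step demanding the most care, though it is not deep.
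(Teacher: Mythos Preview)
Your proof is correct and follows essentially the same approach as the paper: the trivial per-pair bound of at most $n^2$ (the paper uses $(n-2)^2$) for the upper bound, and a balanced spider $S_{k,\ell}$ with both $k$ and $\ell$ tending to infinity for the lower bound. The paper counts all pairs with $d(u,c)\ne d(v,c)$ (including same-leg pairs) rather than only your cross-leg pairs with $p\ne q$, and it leaves $a,b\to\infty$ unspecified rather than fixing $\ell\approx\sqrt n$, but these are cosmetic differences.

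One small slip in your closing aside: Furtula's graph actually attains $NT\approx\tfrac14 n^4$, not $\tfrac1{16}n^4$ --- for $u$ in the clique and $v$ a pendant (about $n^2/4$ such pairs) one has $n_G(u,v)-n_G(v,u)\approx n$. This does not affect your argument, since $\tfrac14<\tfrac12$ still shows the conjectured family is not asymptotically extremal.
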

\begin{proof}
We first show the upper bound. Each of the $\binom n2$ pairs of vertices contributes at most $(n-2)^2$ to $NT(G)$, giving an upper bound of $\binom n2(n-2)^2=(0.5-o(1))n^4$.

To show the lower bound, we consider the Trinajstić index of balanced spider graphs. As in \cite{gt22}, let $S_{a,b}$ denote the balanced spider graph with $a$ legs of length $b$, each attached to a central vertex (for a total of $ab+1$ vertices).

For any vertices $u,v$ with $d(u,c)\neq d(v,c)$ (where vertex $c$ is the center of $S_{a,b}$), we have $(n_G(u,v)-n_G(v,u))^2\geq (n-2b)^2$.

There are at least $\frac{n(n-a)}2$ such pairs $u,v$, so
\begin{align*}
NT(S_a,b)&\geq\frac{n(n-a)}2(n-2b)^2\\
&\geq \frac{n^2(1-\frac 1b)}2n^2\left(1-\frac 2a\right)^2\\
&\geq n^4(0.5)\left(1-\frac1b-\frac4a\right)
\end{align*}
The second inequality uses the fact that $n=ab+1$, and the third uses the inequality $(1-x)(1-y)(1-z)\geq1-x-y-z$ with $x=\frac1b$ and $y=z=\frac2a$.

Choosing $a$ and $b$ to be arbitrarily large, we have that $NT(S_{a,b})=(0.5-o(1))n^4$, proving the lower bound.
\end{proof}

\begin{thm}
Conjecture 5.1 is false for infinitely many $n$.
\end{thm}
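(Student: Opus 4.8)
The plan is to bound the Trinajstić index of the graph named in Conjecture 5.1 and compare it with the asymptotic maximum $(0.5-o(1))n^4$ just established in the preceding theorem. Write $H_n$ for that graph: a clique on vertices $u_1,\dots,u_k$, where $k=\lceil n/2\rceil$, together with pendant vertices $w_1,\dots,w_{\lfloor n/2\rfloor}$, where $w_i$ is attached to $u_i$. I would first record the distances in $H_n$: $d(u_i,u_j)=1$ for $i\ne j$, $d(u_i,w_i)=1$, $d(u_i,w_j)=2$ for $i\ne j$, and $d(w_i,w_j)=3$ for $i\ne j$.

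Next I would partition the $\binom n2$ vertex pairs into four types and bound the contribution of each to $NT(H_n)$. If both vertices are clique vertices, say $\{u_i,u_j\}$, then the transposition swapping $u_i\leftrightarrow u_j$ together with $w_i\leftrightarrow w_j$ (and fixing all other vertices) is an order-two automorphism of $H_n$ interchanging the two chosen vertices, so $n_{H_n}(u_i,u_j)=n_{H_n}(u_j,u_i)$ (the argument is the one used in the proof of the peripherality Lemma in Section \ref{peri}) and the pair contributes $0$; the same automorphism shows that every pair $\{w_i,w_j\}$ of pendants contributes $0$ as well. (When $n$ is odd, the single clique vertex without a pendant is involved in only $O(n)$ pairs, so ignoring it changes the final bound by $O(n^3)$; alternatively one may just restrict to even $n$.) The $\lfloor n/2\rfloor$ pairs $\{u_i,w_i\}$ each contribute at most $(n-2)^2$ — the universal bound used in the upper-bound proof of the preceding theorem — hence at most $O(n^3)$ in total. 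Finally, there are at most $k(k-1)\le n^2/4$ pairs $\{u_i,w_j\}$ with $i\ne j$, each contributing at most $(n-2)^2$. Summing the four types yields
\[NT(H_n)\;\le\;\tfrac14 n^4+O(n^3).\]

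To conclude, I would apply the preceding theorem with the balanced spider $S_{m,m}$: taking $a=b=m$, so that $n=m^2+1$ and the error term $\tfrac1b+\tfrac4a$ tends to $0$, gives $NT(S_{m,m})\ge(0.5-o(1))n^4$. Hence for all sufficiently large $m$, writing $n=m^2+1$,
\[NT(S_{m,m})\;>\;\tfrac14 n^4+O(n^3)\;\ge\;NT(H_n),\]
so $H_n$ is not a graph of maximum Trinajstić index on $n$ vertices. Since this happens for infinitely many values of $n$, Conjecture 5.1 is false for infinitely many $n$.

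I expect the only genuine work to be the distance and automorphism bookkeeping needed to identify which pair types are $\Theta(n^4)$ and which are $O(n^3)$; once that is settled, the decisive observation is simply that the clique–pendant pairs across distinct indices number only about half of all pairs, so $NT(H_n)\sim\tfrac14 n^4<\tfrac12 n^4$, which is precisely the slack that the spider construction exploits.
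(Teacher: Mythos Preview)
Your argument is correct and follows essentially the same route as the paper: bound $NT(H_n)$ by roughly $\tfrac14 n^4$ (since only clique--pendant pairs can contribute on the order of $n^2$ each, and there are about $n^2/4$ of them) and compare with the $(0.5-o(1))n^4$ attained by balanced spiders. Your treatment is in fact more careful than the paper's, which simply asserts that the non--clique--pendant pairs ``contribute negligibly, if at all'' without invoking the automorphism; your explicit choice of $n=m^2+1$ also makes the ``infinitely many $n$'' conclusion cleaner.
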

\begin{proof}
Consider the graph, $G$, described in the conjecture: ``a complete subgraph $K_{\lceil n/2\rceil}$, on whose vertices are attached $\lfloor n/2\rfloor$ pendent vertices. One pendent vertex is allowed per vertex that belongs to the complete subgraph." 

Choosing $u$ from the complete subgraph and $v$ from the set of pendent vertices makes $(n_G(u,v)-n_G(v,u))^2$ approximately $n^2$. All other choices of $\{u,v\}$ contribute negligibly, if at all, to $NT(G)$. Thus, $NT(G)\approx 0.25n^4$ for the graph $G$ described in the conjecture. By Theorem 5.2, there exist arbitrarily large graphs that have a greater Trinajstić index than the graph given by Conjecture 5.1.
\end{proof}

Conjecture 3.2 of \cite{NT} suggests the following:
\begin{conjecture}
The minimum value of the Trinajstic index is equal to 0. The necessary but not sufficient condition for a graph to reach the minimum of the Trinajstic index is to be a regular graph.
\end{conjecture}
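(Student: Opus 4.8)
The plan is to \emph{refute} this conjecture, which is what the introduction advertises; so I take the task to be producing infinitely many non-regular graphs $G$ with $NT(G)=0$. The first step is a trivial but crucial reduction: since every summand of $NT(G)=\sum_{\{u,v\}\subseteq V}(n_G(u,v)-n_G(v,u))^2$ is a nonnegative integer, $NT(G)=0$ if and only if $n_G(u,v)=n_G(v,u)$ for \emph{every} unordered pair of vertices, not merely for edges. So I need a graph that is ``totally distance-balanced'' yet not regular. For \emph{connected} graphs this is extremely rigid — already for diameter $2$ one checks that adjacent vertices are forced to have equal degree, so such graphs are regular — and I suspect no connected example exists at all. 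The resolution is that Furtula's conjecture is phrased for all graphs, so I drop connectivity and work with disjoint unions (using the convention $d(x,y)=\infty$ for vertices in different components, which is how $n_G$ is read off in that case), where all the bookkeeping becomes transparent.

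The key lemma is then: for $G=H_1\sqcup\cdots\sqcup H_k$, if $u\in V(H_a)$ and $v\in V(H_b)$ with $a\neq b$, every vertex of $H_a$ is at finite distance from $u$ and infinite distance from $v$, every vertex of $H_b$ is closer to $v$, and every other vertex is equidistant, so $n_G(u,v)=|V(H_a)|$ and $n_G(v,u)=|V(H_b)|$; and if $u,v\in V(H_a)$, all vertices outside $H_a$ are equidistant from both, so $n_G(u,v)=n_{H_a}(u,v)$. Hence $NT(G)=0$ iff all components have the same order and each component individually has $NT=0$. It then suffices to list families of graphs $H$ of prescribed order with $NT(H)=0$: complete graphs $K_m$ (here $n_{K_m}(u,v)=1$ for every pair), cycles $C_m$ (for any two vertices of a cycle there is a reflection of the cycle interchanging them, so $n(u,v)=n(v,u)$), balanced complete bipartite graphs $K_{t,t}$ (a direct check of the two pair-types), and more generally any vertex-transitive graph. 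Assembling two such pieces of equal order but unequal degree yields counterexamples: $K_m\sqcup C_m$ for $m\ge 4$ (degrees $m-1\neq 2$); $K_{2t}\sqcup K_{t,t}$ for $t\ge 2$; $C_{2t}\sqcup K_{t,t}$ for $t\ge 3$; and disjoint unions of more such pieces — each an infinite family of non-regular graphs with $NT=0$.

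The genuine obstacle is conceptual rather than computational: recognizing that the all-pairs balance condition is so restrictive under connectivity that one should abandon connectivity, after which Steps 2–4 above are essentially immediate. If one instead insists on \emph{connected} counterexamples the problem becomes substantially harder — every construction I have tried (subdividing a regular graph, Cartesian and lexicographic products $G\,\square\, H$ or $G[H]$, adding symmetric chords, linking two balanced pieces by a path or a small gadget) fails precisely because an edge joining vertices of different degrees destroys the balance of that pair. I would look toward distance-biregular bipartite graphs (for instance incidence graphs of generalized quadrangles of order $(s,t)$ with $s\neq t$) as the most plausible connected candidates, but verifying $NT=0$ for any such graph would itself be a nontrivial computation, so I would not expect connected counterexamples to be part of the cleanest refutation.
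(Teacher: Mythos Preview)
Your disconnected-union argument is correct and does refute the conjecture as literally stated, but it takes a genuinely different route from the paper. The paper produces \emph{connected} counterexamples: the graphs of the rhombic dodecahedron and the rhombic triacontahedron are connected, non-regular (the rhombic dodecahedron has eight vertices of degree~$3$ and six of degree~$4$), and have $NT=0$. To promote these to an infinite family, the paper introduces a strengthening --- call $G$ \emph{ultra NT-balanced} if $n_a(u,v)=n_a(v,u)$ for every integer shift $a$ and every pair $u,v$, where $n_a(u,v)$ counts vertices $x$ with $d(x,u)<a+d(x,v)$ --- checks that both rhombic examples are ultra NT-balanced, and then proves that Cartesian products preserve the ultra property. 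So $(\text{rhombic dodecahedron})\,\square\,K_m$ gives arbitrarily large connected non-regular graphs with $NT=0$. Your approach is shorter and entirely elementary, but the paper's is strictly stronger in content: Furtula's computer search covered all graphs on $5$ to $10$ vertices, so a disconnected $8$-vertex example like $K_4\sqcup C_4$ would presumably have been found had disconnected graphs been intended, and connected counterexamples are what the conjecture is really about. Incidentally, your closing speculation was on the mark --- the rhombic dodecahedron is exactly a small bipartite graph with two degree classes and enough symmetry to force balance, and your remark that Cartesian products ``fail'' was premature: they fail to manufacture non-regularity from regular pieces, but they do preserve the balance condition once you already have a non-regular balanced seed.
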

\noindent
In the proof of the following theorem, we provide a class of counterexamples to Conjecture 5.2.
\begin{thm}
There exist arbitrarily large graphs that are not regular but that achieve the minimum Trinajstić index of 0.
\end{thm}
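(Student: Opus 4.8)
The plan is to exhibit the required graphs explicitly, as disjoint unions of two regular graphs that have the same order but different degrees. The starting observation is that $NT(G)=\sum_{\{u,v\}\subset V}(n_G(u,v)-n_G(v,u))^2$ is a sum of nonnegative integers, so $NT(G)=0$ if and only if $n_G(u,v)=n_G(v,u)$ for every pair $\{u,v\}\subset V$; call such a graph \emph{balanced}. Thus it suffices to produce, for arbitrarily large $N$, a non-regular balanced graph on $N$ vertices.

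The key construction is the following. Let $H_1,H_2$ be connected balanced graphs with $|V(H_1)|=|V(H_2)|=m$, and set $G=H_1\sqcup H_2$, so $n=2m$. I would check that $G$ is balanced by two cases. If $u,v$ lie in the same part $H_i$, then every vertex of the other part is at infinite distance from both $u$ and $v$, hence equidistant from them and counted in neither $n_G(u,v)$ nor $n_G(v,u)$; therefore $n_G(u,v)=n_{H_i}(u,v)=n_{H_i}(v,u)=n_G(v,u)$. If instead $u\in H_1$ and $v\in H_2$, then since $H_1$ is connected every vertex of $H_1$ is at finite distance from $u$ but infinite distance from $v$, hence strictly closer to $u$; symmetrically every vertex of $H_2$ is strictly closer to $v$; so $n_G(u,v)=m=n_G(v,u)$. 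Hence $NT(G)=0$.

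It then remains only to pick $H_1,H_2$ so that $G$ is not regular. Take $H_1=K_m$: in $K_m$ the only vertex strictly closer to $u$ than to $v$ is $u$ itself (every other vertex is at distance $1$ from both), so $n_{K_m}(u,v)=1=n_{K_m}(v,u)$ and $K_m$ is balanced. Take $H_2=C_m$: for any two vertices $x,y$ of a cycle there is a reflection of $C_m$ interchanging them, and any automorphism swapping $x,y$ forces $n_{C_m}(x,y)=n_{C_m}(y,x)$, so $C_m$ is balanced as well. For $m\geq 4$ the vertices in the $K_m$ part have degree $m-1\neq 2$, the degree of the vertices in the $C_m$ part, so $G=K_m\sqcup C_m$ is a non-regular graph on $2m$ vertices with $NT(G)=0$; letting $m\to\infty$ yields arbitrarily large examples, which proves the theorem.

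I do not expect a serious obstacle here; the only delicate point is the bookkeeping with infinite distances — specifically the fact that a vertex at infinite distance from both $u$ and $v$ counts as equidistant, which is exactly what makes within-component pairs reduce to the components and forces the requirement that $H_1,H_2$ be connected and of equal order for the cross-component pairs. (The choice of $C_m$ is only for transparency; one could equally well take $H_2$ to be any connected balanced regular graph on $m$ vertices of degree different from $m-1$, such as a balanced complete bipartite graph $K_{m/2,m/2}$ or a cocktail-party graph.)
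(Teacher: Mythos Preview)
Your argument is correct as a proof of the theorem exactly as stated: you produce, for each $m\ge 4$, a non-regular $2m$-vertex graph $G=K_m\sqcup C_m$ with $n_G(u,v)=n_G(v,u)$ for every pair $\{u,v\}$. The case analysis is right, and the handling of infinite distances is the natural convention.

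However, your route is quite different from the paper's, and the difference matters. The paper constructs \emph{connected} counterexamples: it observes that the rhombic dodecahedron graph is a non-regular connected graph with $NT=0$, then introduces a strengthened notion (``ultra NT-balanced'') and shows it is preserved under Cartesian products $G\,\square\,H$, which lets one take products with $K_n$ or $C_n$ to obtain arbitrarily large connected non-regular examples. Your disjoint-union trick is much more elementary, but the resulting graphs are disconnected. Since Furtula's Conjecture~3.2 arose from a computer search over graphs (almost certainly connected ones, as is standard for distance-based indices), disconnected counterexamples arguably sidestep rather than refute the intended conjecture. So: your proof establishes the theorem as literally written, and more cheaply, but the paper's construction is the one that genuinely settles the question in the intended setting. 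If you want to match the paper's strength with minimal extra work, you could note that the rhombic dodecahedron graph alone already gives a single connected non-regular example with $NT=0$, and then appeal to products to get arbitrarily large ones.
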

\begin{proof}
For any integer $a$, let $n_{a}(u,v)$ be the number of vertices, $x$, such that $d(x,u)<a+d(x,v)$. Let $N_{a}(u,v)=n_{a}(u,v)-n_{a}(v,u)$. Note that $NT(G)=0$ if $N_0(u,v)=0$ for all pairs $u,v$ of vertices. We will call such a graph NT-balanced. Call a graph ultra NT-balanced if $N_a(u,v)=0$ for all integers $a$ and vertices $u,v$.

Obviously, every ultra NT-balanced graph is also NT-balanced. Furthermore, the graphs of the rhombic dodecahedron and rhombic triacontahedron are both ultra NT-balanced. Neither graph is regular. In order to generate arbitrarily large counterexamples as promised, we prove the following claim.
\begin{claim}
If $G$ and $H$ are ultra NT-balanced, then $G\square H$ is as well.
\end{claim}
\begin{proof}

Fix integer $a$ and vertices $u=z_{i_1,j_1}$ and $v=z_{i_2,j_2}$. For $(a,b)\in\{1,2,\dots,m\}^2$, let $s(a,b)=d(x_a,x_b)$ in $G$. For $(c,d)\in\{1,2,\dots,n\}^2$, let $t(c,d)=d(y_c,y_d)$ in $H$. Obviously, $d(z_{a,c},z_{b,d})=s(a,b)+t(c,d)$ in $G\square H$.

Since $G$ is ultra NT-balanced, it follows that there exists an involution $X$ on $\{1,2,\cdots,m\}$ such that $s(i,i_1)-s(i,i_2)=-s(X(i),i_1)+s(X(i),i_2)$.

Define an analogous involution $Y$ on $\{1,2,\cdots,n\}$.

Then $Z((i,j))=(X(i),Y(j))$ is an involution on $\{1,2,\dots,m\}\times\{1,2,\dots,n\}$ with the property that
\begin{align*}
d(z_{i,j},z_{i_1,j_1})-d(z_{i,j},z_{i_2,j_2})&=(s(i,i_1)+t(j,j_1))-(s(i,i_2)+t(j,j_2))\\
&=(s(i,i_1)-s(i,i_2))+(t(j,j_1)-t(j,j_2))\\
&=(-s(X(i),i_1)+s(X(i),i_2))+(-t(Y(j),j_1)+t(Y(j),j_2))\\
&=-(s(X(i),i_1)+t(Y(j),j_1))+(s(X(i),i_2)+t(Y(j),j_2))\\
&=-d(z_{X(i),Y(j)},z_{i_1,j_1})+d(z_{X(i),Y(j)},z_{i_2,j_2}).
\end{align*}
Therefore, this involution pairs vertices $z_{i,j}$ that are counted in $n_a(u,v)$ with vertices $z_{X(i),Y(j)}$ that are counted in $n_a(v,u)$, proving that $n_a(u,v)=n_a(v,u)$. Thus, $G\square H$ is ultra NT-balanced, as claimed.
\end{proof}

The promised arbitrarily large follow easily from the claim, for example by crossing the rhombic dodecahedron graph with $K_n$ or $C_n$ for arbitrary $n$.
\end{proof}
\begin{remark}
Every NT-balanced graph whose existence is implied by the proof of Theorem 5.3 is also Ultra NT-balanced. Thus, we set forth the following conjecture.
\end{remark}
\begin{conjecture}
Every NT-balanced graph is also Ultra NT-balanced.
\end{conjecture}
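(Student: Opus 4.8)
The plan is first to recast the conjecture in a cleaner form. For a pair of vertices $u,v$ of $G$, let $f_{u,v}\colon V\to\mathbb{Z}$ be defined by $f_{u,v}(x)=d(x,u)-d(x,v)$; by the triangle inequality $f_{u,v}$ takes values in $\{-d(u,v),\dots,d(u,v)\}$, and we write $c_k(u,v)=|\{x\in V: f_{u,v}(x)=k\}|$ for the induced multiset of values. Using the telescoping identity $N_{a+1}(u,v)-N_a(u,v)=c_a(u,v)-c_{-a}(u,v)$ one checks that $G$ is ultra NT-balanced exactly when, for every pair $u,v$, the multiset $\{f_{u,v}(x):x\in V\}$ is symmetric about $0$, i.e.\ $c_k(u,v)=c_{-k}(u,v)$ for all $k$; whereas $G$ is NT-balanced exactly when, for every pair, only the sign of $f_{u,v}$ is balanced, i.e.\ $\sum_{k<0}c_k(u,v)=\sum_{k>0}c_k(u,v)$. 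So the conjecture asserts: if $d(\cdot,u)-d(\cdot,v)$ is sign-balanced for every pair $u,v$, then it is in fact fully symmetric for every pair.

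I would attempt this by induction on $D=d(u,v)$. The base case $D\le 1$ is immediate: if $u=v$ or $uv\in E$ then $f_{u,v}$ takes values only in $\{-1,0,1\}$, so being sign-balanced is literally the same as being symmetric; in particular the conclusion is automatic for adjacent pairs, consistently with the fact that every NT-balanced graph exhibited in the paper is a Cartesian product of ultra-NT-balanced seeds and hence already ultra. For the inductive step, fix a pair $u,v$ at distance $D\ge 2$ and a neighbour $u'$ of $u$ lying on a $u$--$v$ geodesic, so $d(u',v)=D-1$ and, by the inductive hypothesis, the distribution of $f_{u',v}$ is symmetric. Since $f_{u,v}(x)-f_{u',v}(x)=d(x,u)-d(x,u')\in\{-1,0,1\}$, the task reduces to showing that this particular $\{-1,0,1\}$-valued perturbation of a symmetric integer distribution, which stays sign-balanced because $u,v$ is an NT-balanced pair, is still symmetric. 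For arbitrary distributions and perturbations this is false, so the proof must exploit which perturbations $x\mapsto d(x,u)-d(x,u')$ can occur on a graph and how they are correlated with $f_{u',v}$.

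This is the crux, and the reason the conjecture is not routine: NT-balance at one pair is a single linear equation, whereas full symmetry is a whole family of them, so the inductive step cannot close using the constraint at $(u,v)$ alone. The standard tricks each fail in isolation --- sliding an endpoint replaces $f_{u,v}$ by $f_{u',v}$ plus an uncontrolled $\{-1,0,1\}$ term whose joint law with $f_{u',v}$ is new information, and attaching a pendant path of length $a$ at $u$ would let one read off the shifted counts $n_{\pm a}(u,v)$ but destroys NT-balance of the ambient graph so nothing is learned. The route I would pursue is to handle, for each fixed $D$, all pairs at distance $D$ at once: form the linear system whose unknowns are the level counts $c_k(u,v)$, whose equations are the NT-balance identities at every pair, the symmetries $c_k(u',v)=c_{-k}(u',v)$ already known for pairs at distance below $D$, the recursions $c_k(u,v)=\sum_{\delta\in\{-1,0,1\}}|\{x:f_{u',v}(x)=k-\delta,\ d(x,u)-d(x,u')=\delta\}|$, and the symmetry relation $c_k(u,v)=c_{-k}(v,u)$, and then try to show the system forces $c_k(u,v)=c_{-k}(u,v)$; I expect establishing non-degeneracy of this system to be the main obstacle. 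Should that resist, the alternative is to search for a counterexample: any NT-balanced graph with an asymmetric difference distribution at some pair would necessarily lie outside the Cartesian-product family of the paper's Claim, and finding one would refute the conjecture.
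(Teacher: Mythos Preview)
The statement you are trying to prove is listed in the paper as a \emph{conjecture}; the paper gives no proof of it at all. There is therefore nothing to compare your approach against --- the paper merely observes that all the NT-balanced graphs it has constructed happen to be ultra NT-balanced, and records the general statement as open.

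Your reformulation is correct and useful: writing $f_{u,v}(x)=d(x,u)-d(x,v)$ and $c_k(u,v)=|\{x:f_{u,v}(x)=k\}|$, one has $N_{a+1}(u,v)-N_a(u,v)=c_a(u,v)-c_{-a}(u,v)$, so ultra NT-balance is exactly symmetry of the level counts $c_k=c_{-k}$ for every pair, while NT-balance is only the single equation $\sum_{k<0}c_k=\sum_{k>0}c_k$ for every pair. Your base case $d(u,v)\le 1$ is genuinely trivial for the reason you state.

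But your proposal is not a proof, and you say so yourself. The inductive step --- passing from a symmetric distribution for $f_{u',v}$ to one for $f_{u,v}$ via the $\{-1,0,1\}$-valued perturbation $d(\cdot,u)-d(\cdot,u')$ --- does not close: you correctly note that sign-balance of a perturbed symmetric distribution does not force symmetry in general, and you do not supply the graph-theoretic input that would rule out the bad perturbations. The suggestion to assemble all NT-balance equations at all pairs into one linear system and argue non-degeneracy is a reasonable \emph{plan}, but no argument for non-degeneracy is given, and you explicitly flag this as ``the main obstacle.'' As written, then, this is a well-framed research outline for an open problem, not a proof; the honest fallback you mention --- searching for a counterexample outside the Cartesian-product family --- is equally appropriate given the current state of the question.
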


\section{Discussion and future directions}

In this paper, we improved the bounds on the maximum value achieved by $\eperi$ and $\espr$ over $n$-vertex graphs and over other classes of graphs. We computed the maximum of $\peri$ over $n$-vertex trees and graphs for $n\leq8$, completing a result from \cite{gt22} that does the same for $n\geq9$. We disproved two conjectures from \cite{NT} about graphs that maximize and minimize the Trinajstić index.

One direction for future research is to continue improving bounds on the maxima of $\eperi,\espr,$ and $NT$ over connected graphs. The corresponding maxima of these measures over bipartite graphs also remain open.

Another possible direction is the investigation of the expected value of peripherality indices of random graphs. For example, \cite{gt22} proves the following result:
\begin{thm}
The expected value of $\irr(G_{n,\frac12})$ is $\frac{n^{5/2}(1-o(1))}{4\sqrt\pi}$.
\end{thm}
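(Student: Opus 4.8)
The plan is to apply linearity of expectation over the edges of $G=G_{n,1/2}$, reduce the contribution of each potential edge to a one–dimensional moment computation, and then evaluate that moment via the central limit theorem together with a uniform–integrability argument.

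\medskip
\noindent\textbf{Step 1 (reduction to a single moment).} Write
\[
\mathbb{E}\bigl[\irr(G_{n,1/2})\bigr]=\sum_{\{u,v\}\subset V}\Pr[\{u,v\}\in E]\cdot\mathbb{E}\bigl[\,|\deg(u)-\deg(v)|\ \big|\ \{u,v\}\in E\,\bigr].
\]
Since edges appear independently with probability $\tfrac12$, the first factor is $\tfrac12$. Conditioning on $\{u,v\}\in E$, we have $\deg(u)=1+A_u$ and $\deg(v)=1+A_v$, where $A_u$ (resp.\ $A_v$) counts the neighbours of $u$ (resp.\ $v$) in $V\setminus\{u,v\}$; these are determined by disjoint families of independent edge indicators, so $A_u$ and $A_v$ are independent, each $\mathrm{Bin}(n-2,\tfrac12)$. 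Hence the conditional expectation equals $\mu_n:=\mathbb{E}\,|A_u-A_v|$, the same number for every pair by symmetry, and $\mathbb{E}[\irr(G_{n,1/2})]=\tfrac12\binom n2\,\mu_n$.

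\medskip
\noindent\textbf{Step 2 (estimating $\mu_n$).} Write $A_u-A_v=\sum_{i=1}^{n-2}Z_i$ with $Z_i$ i.i.d., $Z_i\in\{-1,0,1\}$ with probabilities $\tfrac14,\tfrac12,\tfrac14$, so $\mathbb{E}Z_i=0$ and $\mathrm{Var}(Z_i)=\tfrac12$. By the central limit theorem, $(A_u-A_v)/\sqrt{n-2}$ converges in distribution to $\mathcal N(0,\tfrac12)$. Moreover $\mathbb{E}\bigl[((A_u-A_v)/\sqrt{n-2})^2\bigr]=\tfrac12$ for all $n$, so this family is bounded in $L^2$, hence uniformly integrable; therefore the first absolute moments converge, giving
\[
\mathbb{E}\Bigl|\tfrac{A_u-A_v}{\sqrt{n-2}}\Bigr|\ \longrightarrow\ \mathbb{E}\,|\mathcal N(0,\tfrac12)|=\tfrac1{\sqrt2}\sqrt{\tfrac2\pi}=\tfrac1{\sqrt\pi}.
\]
Thus $\mu_n=\sqrt{(n-2)/\pi}\,(1+o(1))=\sqrt{n/\pi}\,(1+o(1))$. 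Combining with Step 1,
\[
\mathbb{E}\bigl[\irr(G_{n,1/2})\bigr]=\tfrac12\binom n2\,\mu_n=\frac{n(n-1)}{4}\sqrt{\frac n\pi}\,(1+o(1))=\frac{n^{5/2}(1-o(1))}{4\sqrt\pi}.
\]

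\medskip
\noindent\textbf{Main obstacle.} The one nontrivial point is upgrading the distributional CLT to convergence of the first absolute moment of $(A_u-A_v)/\sqrt{n-2}$. I would handle this exactly as above, via the $L^2$-boundedness/uniform-integrability observation; alternatively one can invoke a local central limit theorem for the lattice walk $A_u-A_v$ to obtain an explicit error term, or bound $\bigl|\mu_n-\sqrt{(n-2)/\pi}\bigr|$ directly by a Berry–Esseen estimate. Everything else is routine bookkeeping with the $o(1)$ factors (in particular $\binom n2=\tfrac{n^2}{2}(1-o(1))$ and $\sqrt{n-2}=\sqrt n\,(1-o(1))$).
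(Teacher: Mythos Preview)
Your proof is correct and follows the same route as the paper: linearity of expectation over potential edges, then the CLT applied to the symmetric walk $A_u-A_v=\sum Z_i$. Note that the paper does not actually prove the $p=\tfrac12$ statement (it is quoted from \cite{gt22}) but proves the general-$p$ version by the identical decomposition; your treatment is in fact the more rigorous of the two, since you justify the passage from distributional convergence to convergence of first absolute moments via $L^2$-boundedness/uniform integrability, whereas the paper simply asserts $\mathbb{E}\,|\deg u-\deg v|\approx k\sigma$ and then back-solves the constant $k=\sqrt{2/\pi}$ by matching against the cited $p=\tfrac12$ answer.
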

This result can be generalized to any probability $p$ in place of $\frac12$.
\begin{thm}
The expected value of $\text{irr}(G_{n,p})$ is $=p\sqrt{\frac{p(1-p)}\pi}n^{5/2}(1\pm o(1))$
\end{thm}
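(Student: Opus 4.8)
The plan is to mirror the argument for $p=\tfrac12$ via linearity of expectation, reducing the whole computation to the mean absolute difference of two independent binomials. Write
\[
\irr(G_{n,p})=\sum_{\{u,v\}\subset V}\mathbbm1[\{u,v\}\in E]\,|\deg(u)-\deg(v)|
\]
and take expectations term by term. By vertex symmetry all $\binom n2$ summands have the same expectation, so it suffices to evaluate $E\big[\mathbbm1[\{u,v\}\in E]\,|\deg(u)-\deg(v)|\big]$ for one fixed pair $\{u,v\}$. Conditioning on the event $\{u,v\}\in E$ (probability $p$), we have $\deg(u)=1+X_u$ and $\deg(v)=1+X_v$, where $X_u$ and $X_v$ count the neighbours of $u$, respectively of $v$, among the remaining $n-2$ vertices; these are independent $\mathrm{Bin}(n-2,p)$ variables, and the presence of the edge $\{u,v\}$ affects neither of them (the edges $\{u,w\}$, $\{v,w\}$, $\{u,v\}$ are all distinct). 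Hence $\deg(u)-\deg(v)=X_u-X_v$ on this event, and
\[
E[\irr(G_{n,p})]=\binom n2\,p\,E|X_u-X_v|.
\]

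Next I would estimate $E|X_u-X_v|$. Set $D:=X_u-X_v$, which is symmetric with mean $0$ and variance $2(n-2)p(1-p)$. For fixed $p\in(0,1)$ this variance tends to infinity, so by the central limit theorem $D/\sqrt{2(n-2)p(1-p)}$ converges in distribution to a standard normal; combined with uniform integrability of the normalised family — which follows from the uniform bound $E[D^4]=O\big((np(1-p))^2\big)=O\big(\mathrm{Var}(D)^2\big)$ together with Cauchy--Schwarz to discard the tails — this upgrades to convergence of first absolute moments. Since $E|\mathcal N(0,1)|=\sqrt{2/\pi}$, we obtain
\[
E|D|=(1\pm o(1))\sqrt{2(n-2)p(1-p)}\cdot\sqrt{2/\pi}=(1\pm o(1))\,2\sqrt{\frac{np(1-p)}{\pi}}.
\]
Equivalently, one may invoke the classical fact that the Gini mean difference of two i.i.d.\ asymptotically normal variables with variance $\sigma^2$ is asymptotically $2\sigma/\sqrt\pi$.

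Finally, substituting back and using $\binom n2=\tfrac{n^2}{2}(1\pm o(1))$,
\[
E[\irr(G_{n,p})]=\frac{n^2}{2}(1\pm o(1))\cdot p\cdot 2\sqrt{\frac{np(1-p)}{\pi}}(1\pm o(1))=p\sqrt{\frac{p(1-p)}{\pi}}\,n^{5/2}(1\pm o(1)),
\]
which is the claimed bound; specialising to $p=\tfrac12$ recovers the constant $\tfrac1{4\sqrt\pi}$. The only place any care is needed is the passage from the distributional CLT for $D$ to the convergence of $E|D|$; everything else is bookkeeping. An alternative that sidesteps uniform integrability is to use an explicit two-sided estimate for the mean absolute deviation of a binomial (via a local limit theorem, or Berry--Esseen applied to the distribution of $|D|$), which yields the same leading term with an error of strictly lower order.
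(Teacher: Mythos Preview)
Your proof is correct and follows essentially the same approach as the paper: linearity of expectation reduces to one pair $\{u,v\}$, independence of the edge indicator from $\deg(u)-\deg(v)$ gives the factor $p$, and a CLT on the degree difference yields the factor $\sqrt{2/\pi}\cdot\sqrt{2(n-2)p(1-p)}$. The only notable differences are cosmetic: the paper writes $\deg(u)-\deg(v)=\sum_{w\neq u,v}X_w$ with $X_w\in\{-1,0,1\}$ rather than as $X_u-X_v$ for independent binomials, and you are more explicit about uniform integrability in upgrading the CLT to convergence of $E|D|$ (the paper simply asserts this step).
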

\begin{proof}
By linearity of expectation, suffices to find the contribution from each unordered pair $(u,v)$ and multiply it by $\binom n2$. Let $u$ and $v$ be the two vertices. The probability that they are connected by an edge is $p$.

For all vertices $w\not\in\{u,v\}$, let $X_w=\mathbbm1[\{u,w\}\in E]-\mathbbm1[\{v,w\}\in E]$. Thus, $X_w$ is $1$ with probability $p(1-p)$, $-1$ with probability $p(1-p)$, and $0$ otherwise. It follows that the distribution $X_w$ has standard deviation $\sqrt{2p(1-p)}$, and all $n-2$ of them are independent.

By the Central Limit Theorem, $\deg(u)-\deg(v)=\sum\limits_{w\not\in\{u,v\}}X_w$ approximates a normal distribution with standard deviation $\sigma=\sqrt{2p(1-p)(n-2)}$. Let $k$ be the ratio of the mean absolute deviation to the standard deviation of an arbitrary normal distribution. Then the expected value of $|\deg(u)-\deg(v)|$ is approximately $k\sigma$.

Thus, the expected value of $|\text{irr}(G_{n,p})|$ is $\binom n2k\sigma=(1\pm o(1))pk\sqrt{\frac{p(1-p)}2}n^{2.5}$. The only value of $k$ that makes Theorem 6.1 correct is $k=\sqrt{\frac2\pi}$. Plugging it in, the expected value of $|\text{irr}(G_{n,p})|$ is $(1\pm o(1))p\sqrt{\frac{p(1-p)}\pi}n^{2.5}$, as desired.
\end{proof}

Another direction for future research is to attempt to define peripherality measures that harness the full structure of chemical networks. Chemical networks, such as MOZART-4 and SuperFast, consist not of undirected edges, but rather of chemical reactions with several reactants and several products. However, most existing measures, including all of the measures studied in this paper, are specific to undirected graphs.

\section{Acknowledgements}
The author is very grateful to Dr.\ Jesse Geneson for suggesting this research project and guiding the research and the writing of this paper. The author also thanks Dr.\ Tanya Khovanova for her feedback on drafts of this paper. The author also thanks the PRIMES USA program for making this research possible.

\end{document}